\providecommand{\U}[1]{\protect\rule{.1in}{.1in}}
\providecommand{\U}[1]{\protect\rule{.1in}{.1in}}
\providecommand{\U}[1]{\protect\rule{.1in}{.1in}} \textwidth 16.3cm
\theoremstyle{plain}
\newtheorem{theorem}{Theorem}[section]
\newtheorem{proposition}[theorem]{Proposition}
\newtheorem{corollary}[theorem]{Corollary}
\newtheorem{remark}[theorem]{Remark}
\newtheorem{problem}[theorem]{Problem}
\numberwithin{equation}{section}
\begin{document}
\title[Remarks on absolutely summing multilinear operators]{Some remarks on absolutely summing multilinear operators}
\author{A. Thiago Bernardino and Daniel Pellegrino}
\address[D. Pellegrino]{ Departamento de Matem\'{a}tica, Universidade Federal da
Para\'{\i}ba, 58051-900, Jo\~{a}o Pessoa, Brazil\\
[A. Thiago Bernardino] UFRN/CERES - Centro de Ensino Superior do Serid\'{o},
Rua Joaquim Greg\'{o}rio, S/N, 59300-000, Caic\'{o}- RN, Brazil}
\email{dmpellegrino@gmail.com}
\thanks{2010 Mathematics Subject Classification: Primary 46G25; Secondary 47L22, 47H60.}
\thanks{Key words: absolutely $p$-summing multilinear operators, cotype}

\begin{abstract}
This short note has a twofold purpose:

(i) to solve the question that motivates a recent paper of D. Popa on
multilinear variants of Pietsch's composition theorem for absolutely summing
operators. More precisely, we remark that there is a natural perfect extension
of Pietsch's composition theorem to the multilinear and polynomial settings.
This fact was overlooked in the aforementioned paper;

(ii) to investigate extensions of some results of the aforementioned paper for
particular situations, mainly by exploring cotype properties of the spaces involved.

When dealing with (ii) we also prove an useful, albeit simple, result of
independent interest (which is a consequence of recent arguments used in a
recent paper of O. Blasco \textit{et al.}). The result asserts that if $X_{1}$
has cotype $2$ and $1\leq p\leq s\leq2,$ then every absolutely $(s;s,t,...,t)$%
-summing multilinear operator from $X_{1}\times\cdots\times X_{n}$ to $Z$ is
absolutely $(p;p,t,...,t)$-summing, for all $t\geq1$ and all $X_{2}%
,...,X_{n},Z$. In particular, under the same hypotheses, every absolutely
$(s;s,...,s)$-summing multilinear operator from $X_{1}\times\cdots\times
X_{n}$ to $Z$ is absolutely $(p;p,...,p)$-summing. A similar result holds when
$X_{1}$ has cotype greater than $2$ (and obviously, \textit{mutatis mutandis,
}when $X_{1}$ is replaced by $X_{j}$ with $j\neq1$)$.$ These results
generalize previous results of H. Junek \textit{et al.}, G. Botelho \textit{et
al.} and D. Popa.

In the last section we show that a straightforward argument solves partially
another problem from the aforementioned paper of D. Popa.

\end{abstract}
\maketitle

\section{Introduction}

In this note the letters $X_{1},...,X_{n},X,Y,Z$ will always denote Banach
spaces over $\mathbb{K}=\mathbb{R}$ or $\mathbb{C}$ and $X^{\ast}$ represents
the topological dual of $X$.

The concept of absolutely $p$-summing linear operators is due to A. Pietsch
\cite{stu}. If $1\leq q\leq p<\infty,$ we say that a continuous linear
operator $u:X\rightarrow Y$ is absolutely $(p;q)$-summing if $\left(
u(x_{j})\right)  _{j=1}^{\infty}\in\ell_{p}(Y)$ whenever $(x_{j}%
)_{j=1}^{\infty}\in\ell_{q}^{w}(X),$ where $\ell_{q}^{w}(X):=\{(x_{j}%
)_{j=1}^{\infty}\subset X:\sup_{\varphi\in B_{X^{\ast}}}%
{\textstyle\sum\nolimits_{j}}
\left\vert \varphi(x_{j})\right\vert ^{q}<\infty\}.$

The class of absolutely $(p;q)$-summing linear operators from $X$ to $Y$ will
be represented by $\Pi_{p,q}\left(  X;Y\right)  $ and by $\Pi_{p}\left(
X;Y\right)  $ if $p=q.$ From now on the space of all continuous $n$-linear
operators from $X_{1}\times\cdots\times X_{n}$ to $Y$ will be denoted by
$\mathcal{L}(X_{1},...,X_{n};Y).$

If $0<p,q_{1},...,q_{n}<\infty$ and $\frac{1}{p}\leq\frac{1}{q_{1}}%
+\cdots+\frac{1}{q_{n}},$ a multilinear operator $T\in\mathcal{L}%
(X_{1},...,X_{n};Y)$ is absolutely\emph{ }$(p;q_{1},...,q_{n})$-summing if
$(T(x_{j}^{(1)},...,x_{j}^{(n)}))_{j=1}^{\infty}\in\ell_{p}(Y)$ for every
$(x_{j}^{(k)})_{j=1}^{\infty}\in\ell_{q_{k}}^{w}(X_{k}),k=1,...,n.$ In this
case we write $T\in\Pi_{p;q_{1},...,q_{n}}^{n}\left(  X_{1},...,X_{n}%
;Y\right)  $. If $q_{1}=\cdots=q_{n}=q,$ we sometimes write $\Pi_{p;q}%
^{n}\left(  X_{1},...,X_{n};Y\right)  $ instead of $\Pi_{p;q,...,q}^{n}\left(
X_{1},...,X_{n};Y\right)  $ and if $q_{1}=\cdots=q_{n}=q=p$ we simply write
$\Pi_{p}^{n}\left(  X_{1},...,X_{n};Y\right)  $ instead of $\Pi_{p;p}%
^{n}\left(  X_{1},...,X_{n};Y\right)  .$ In the special case in which $p=q/n$
this class has special properties and the operators in $\Pi_{\frac{q}{n}%
;q}^{n}$ are called $q$-dominated operators. Here we will use the notation
$\delta_{q}^{n}=\Pi_{\frac{q}{n};q}^{n}.$

Finally, we recall the class of multiple $(p;q)$-summing multilinear
operators. If $1\leq q\leq p<\infty$, a multilinear operator $T\in
\mathcal{L}(X_{1},...,X_{n};Y)$ is multiple\emph{ }$(p;q)$-summing if
$(T(x_{j_{1}}^{(1)},...,x_{j_{n}}^{(n)}))_{j_{1},..,j_{n}=1}^{\infty}\in
\ell_{p}(Y)$ for every $(x_{j}^{(k)})_{j=1}^{\infty}\in\ell_{q_{k}}^{w}%
(X_{k}),k=1,...,n.$ In this case we write $T\in\Pi_{p;q}^{\text{mult,}%
n}\left(  X_{1},...,X_{n};Y\right)  $ or $\Pi_{p}^{\text{mult,}n}\left(
X_{1},...,X_{n};Y\right)  $ if $p=q$.

For details on the linear theory of absolutely summing operators we refer to
the classical monograph \cite{djt} and for recent developments we refer to
\cite{bp3, PellZ, ku, ku2, sss} and references therein; for the multilinear
theory we refer, for example, to \cite{CD, davidarchiv, PPPP} and references therein.

One important result of the linear theory of absolutely summing linear
operators is Pietsch's composition theorem:

If $p,q\in(1,\infty)$ and $r\in\lbrack1,\infty)$ are such that $\frac{1}%
{r}=\frac{1}{p}+\frac{1}{q}$, then
\begin{equation}
\Pi_{q}\circ\Pi_{p}\subset\Pi_{r}. \label{uty}%
\end{equation}
In a recent paper \cite{popa} this result is investigated in the context of
multilinear mappings. The first question faced in \cite{popa} was to decide
what should be the natural class of absolutely $p$-summing $n$-linear mappings
$\mathcal{I}_{p}^{n}$ such that the analogous result would hold in the
multilinear setting. More precisely the following problem summarize
mathematically the motivation of the paper \cite{popa} (see \cite[Section
1]{popa}):

\begin{problem}
\label{has}If $p,q\in(1,\infty)$ and $r\in\lbrack1,\infty)$ are such that
$\frac{1}{r}=\frac{1}{p}+\frac{1}{q}$, does the inclusion
\begin{equation}
\Pi_{q}\circ\mathcal{I}_{p}^{n}\subset\mathcal{I}_{r}^{n} \label{bein}%
\end{equation}
always hold for all natural numbers $n$ and some natural $n$-linear extension
$\left(  \mathcal{I}_{s}^{n}\right)  _{s=1}^{\infty}$ of $\left(  \Pi
_{s}\right)  _{s=1}^{\infty}?$
\end{problem}

In \cite{popa} it is shown that the inclusion (\ref{bein}) is far from being
true for the class of dominated $n$-linear mappings, i.e., $\mathcal{I}%
_{p}^{n}=\delta_{p}^{n}$ and $\mathcal{I}_{r}^{n}=\delta_{r}^{n}.$ So the
author decided to investigate the case $\mathcal{I}_{p}^{n}=\delta_{p}^{n}$
and $\mathcal{I}_{r}^{n}=\Pi_{r}^{n},$ i.e., the following question was considered:

\begin{problem}
\label{opoi}Let $p,q\in(1,\infty)$ and $r\in\lbrack1,\infty)$ be such that
$\frac{1}{r}=\frac{1}{p}+\frac{1}{q}$. For what natural numbers $n$ the
inclusion
\[
\Pi_{q}\circ\delta_{p}^{n}\subset\Pi_{r}^{n}%
\]
is true?
\end{problem}

Among other interesting results, in \cite[Theorem 4 and Corollary 19]{popa} it
is proved that the above inclusion is valid for all $n$ and $r\in\left[
1,2\right]  :$

\begin{theorem}
\label{op}(\cite{popa})Let $p,q\in(1,\infty)$ and $r\in\lbrack1,2]$ be such
that $\frac{1}{r}=\frac{1}{p}+\frac{1}{q}$. Then
\begin{equation}
\Pi_{q}\circ\delta_{p}^{n}\subset\Pi_{r}^{n} \label{p00}%
\end{equation}
for all natural numbers $n.$
\end{theorem}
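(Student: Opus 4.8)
The plan is to reduce the multilinear statement to the linear composition theorem \eqref{uty} by exploiting the \emph{factorization of dominated operators}. Every $S\in\delta_p^n(X_1,\dots,X_n;Y)=\Pi^n_{p/n;p}(X_1,\dots,X_n;Y)$ admits a representation $S=B\circ(u_1,\dots,u_n)$ with $u_k\in\Pi_p(X_k;G_k)$ and $B\in\mathcal{L}(G_1,\dots,G_n;Y)$ bounded (the direct implication is immediate from H\"older's inequality, and this is the content of the factorization theorem for $p$-dominated multilinear maps). Hence for $w\in\Pi_q(Y;Z)$ one has
\[
w\circ S\;=\;(w\circ B)\circ(u_1,\dots,u_n),
\]
and the whole problem is transferred to understanding how the composite $w\circ B$ acts on \emph{strongly} $p$-summable sequences.

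Concretely, since $\tfrac1r=\tfrac1p+\tfrac1q>\tfrac1p$ we have $r<p$, so $\ell_r^w(X_k)\subseteq\ell_p^w(X_k)$; given $(x_j^{(k)})_j\in\ell_r^w(X_k)$, the $p$-summing maps $u_k$ produce $g_j^{(k)}:=u_k x_j^{(k)}$ with $(g_j^{(k)})_j\in\ell_p(G_k)$. The crux is therefore the following linear-type lemma: if $w\in\Pi_q(Y;Z)$, $B$ is a bounded $n$-linear map, and $(g_j^{(k)})_j\in\ell_p(G_k)$ for each $k$, then $(wB(g_j^{(1)},\dots,g_j^{(n)}))_j\in\ell_r(Z)$. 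To attack it I would invoke Pietsch's domination theorem for the linear factor $w$, producing a regular probability $\nu$ on $B_{Y^\ast}$ with
\[
\sum_j\|wB(g_j^{(1)},\dots,g_j^{(n)})\|^r\;\le\;\pi_q(w)^r\sum_j\Big(\int_{B_{Y^\ast}}|(\psi\circ B)(g_j^{(1)},\dots,g_j^{(n)})|^q\,d\nu(\psi)\Big)^{r/q}.
\]

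For each fixed $\psi\in B_{Y^\ast}$ the map $\psi\circ B$ is a bounded \emph{scalar-valued} $n$-linear form with $\|\psi\circ B\|\le\|B\|$, so it enjoys the automatic summability of scalar multilinear forms (Defant--Voigt, refined by cotype estimates). I would combine this with the inequality $r\le q$ and Minkowski's integral inequality to interchange the $j$-sum and the $\psi$-integral, then close by H\"older against the finite quantities $\prod_k\|(g_j^{(k)})_j\|_{\ell_p(G_k)}$, thereby placing $(wB(\mathbf g_j))_j$ in $\ell_r(Z)$ and concluding $w\circ S\in\Pi_r^n$.

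The main obstacle is precisely the last estimate, and this is where the hypothesis $r\in[1,2]$ is indispensable. A purely crude H\"older bound (treating $w$ as merely bounded, so that $(B(\mathbf g_j))_j\in\ell_{p/n}$) only lands in $\ell_r$ when $p\le(n-1)q$; outside that range one must genuinely use the $q$-summing structure of $w$ together with the summability of the scalar forms $\psi\circ B$, and the upgrade from their weak summability to the sharp exponent $r$ is exactly the cotype-$2$/Hilbertian control available when $r\le 2$. For $r>2$ the summability furnished by scalar $n$-linear forms is too weak to reach exponent $r$, which is consistent with the theorem being stated only for $r\in[1,2]$; care must also be taken that Pietsch's theorem is applied to the \emph{linear} factor while the multilinear estimate is carried uniformly in $\psi$ by the forms $\psi\circ B$.
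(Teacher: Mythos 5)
First, a point of comparison: the paper you were checked against does not prove Theorem~\ref{op} at all --- it is quoted from \cite{popa} (Theorem 4 and Corollary 19 there) --- so your argument has to stand entirely on its own, and it does not, because the statement you isolate as the crux is false. The preliminary steps are sound: every $S\in\delta_{p}^{n}(X_{1},...,X_{n};Y)$ does factor as $S=B\circ(u_{1},...,u_{n})$ with $u_{k}\in\Pi_{p}(X_{k};G_{k})$ and $B$ bounded multilinear, and since $r<p$ one gets $(u_{k}x_{j}^{(k)})_{j}\in\ell_{p}(G_{k})$ whenever $(x_{j}^{(k)})_{j}\in\ell_{r}^{w}(X_{k})$. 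But the ``linear-type lemma'' to which you reduce everything (if $w\in\Pi_{q}(Y;Z)$, $B$ is bounded $n$-linear and $(g_{j}^{(k)})_{j}\in\ell_{p}(G_{k})$ for each $k$, then $(wB(g_{j}^{(1)},...,g_{j}^{(n)}))_{j}\in\ell_{r}(Z)$ with $\frac{1}{r}=\frac{1}{p}+\frac{1}{q}$) is false, and not for lack of cleverness in exploiting the summing structure of $w$: it fails even when $w$ is absolutely $1$-summing. Take $n=2$, $p=4$, $q=2$ (so $r=\frac{4}{3}\in[1,2]$), $G_{1}=Y=\ell_{1}$, $G_{2}=\mathbb{K}$, $Z=\ell_{2}$, $B(g,\lambda)=\lambda g$, and let $w:\ell_{1}\rightarrow\ell_{2}$ be the formal inclusion, which is $1$-summing by Grothendieck's theorem and hence $q$-summing for every $q$. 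With $g_{j}^{(1)}=j^{-1/3}e_{j}$ and $g_{j}^{(2)}=j^{-1/3}$, both sequences are strongly $4$-summable, yet
\[
\left\Vert wB(g_{j}^{(1)},g_{j}^{(2)})\right\Vert _{\ell_{2}}=j^{-2/3}
\quad\text{and}\quad
\sum_{j}\left(  j^{-2/3}\right)  ^{4/3}=\sum_{j}j^{-8/9}=\infty,
\]
so the composite sequence is not in $\ell_{4/3}(Z)$.

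The gap, then, is that your reduction throws away exactly the information a correct proof must use. The sequences produced by the factorization are not arbitrary elements of $\ell_{p}(G_{k})$: since bounded linear operators preserve weak summability, $(u_{k}x_{j}^{(k)})_{j}$ lies in $\ell_{p}(G_{k})\cap\ell_{r}^{w}(G_{k})$, and it is this simultaneous strong-$p$/weak-$r$ structure --- equivalently, the joint use, on the same sequences, of the Pietsch domination for the dominated map $S$ and for the summing map $w$, together with Khinchine-type estimates for the scalar forms (this is essentially how \cite{popa} proceeds, and it is there that the hypothesis $r\leq2$ genuinely enters) --- that makes the theorem true. Once you pass to arbitrary strongly $p$-summable sequences $(g_{j}^{(k)})_{j}$, the target exponent $r<p$ is unreachable, as the example shows. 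Consequently the concluding discussion in your sketch (Pietsch domination for $w$, Defant--Voigt for $\psi\circ B$, Minkowski and H\"{o}lder, cotype considerations for $r\leq2$) cannot be completed in any form: it is aimed at proving a statement that is simply not true.
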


In view of Theorem \ref{op} the following problem is posed \cite{popa} (in the
last section we use a very simple remark to solve this problem for all
$n\geq\frac{p}{r}$):

\begin{problem}
\label{novo}Let $p,q\in(1,\infty)$ and $r\in(2,\infty)$ be such that $\frac
{1}{r}=\frac{1}{p}+\frac{1}{q}$. For what natural numbers $n$ the inclusion
$\Pi_{q}\circ\delta_{p}^{n}\subset\Pi_{r}^{n}$ is true?
\end{problem}

We believe that by considering different classes ($p$-dominated and absolutely
$r$-summing $n$-linear operators), the Problem \ref{opoi} becomes a little bit
far from the original motivation (\ref{uty}). But, of course, Problem
\ref{opoi} has its intrinsic mathematical interest and a complete solution
seems to be far from being simple.

It is worth mentioning that the class $\Pi_{r}^{n}$ (although this class had
been broadly explored by several authors and also offers interesting
challenging problems) is usually not considered as a completely adequate
extension of $\Pi_{r},$ since several of the linear properties of $\Pi_{r}$
are not lifted to $\Pi_{r}^{n}$ (this kind of fault of the class $\Pi_{r}^{n}$
- and its polynomial version - was discussed in some recent papers (see, for
example, \cite[page 167]{popa3} and \cite{CDM09, rrr, QM})). Using the
terminology of \cite{CDM09} it can be said that the ideal of absolutely
$r$-summing $n$-homogeneous polynomials (associated to $\Pi_{r}^{n}$) is not
compatible with the linear operator ideal $\Pi_{r}$. For details on operator
ideals we refer to the classical monograph \cite{pp1} and \cite{ddjj}.

The case $r=1$ of Theorem \ref{op} (\cite[Theorem 4]{popa}) deserves some
special attention. Contrary to the case $n=1$, for $n\geq2,$ in many cases,
i.e., for several Banach spaces $X_{1},...,X_{n},Y$, the space $\Pi_{1}%
^{n}(X_{1},...,X_{n};Y)$ coincides with the whole space of continuous
multilinear operators $\mathcal{L}(X_{1},...,X_{n};Y)$ and Theorem \ref{op}
(with $r=1$) becomes useless. For example:

\begin{itemize}
\item For all Banach spaces $X_{1},...,X_{n}$ the folkloric Defant-Voigt
Theorem asserts that%
\begin{equation}
\Pi_{1}^{n}(X_{1},...,X_{n};\mathbb{K})=\mathcal{L}(X_{1},...,X_{n}%
;\mathbb{K}). \label{c1}%
\end{equation}

\item (\cite{irish}) If each $X_{j}$ is a Banach space with cotype $q_{j}$ for
every $j$ and $1\leq\frac{1}{q_{1}}+\cdots+\frac{1}{q_{n}}$, then
\begin{equation}
\Pi_{1}^{n}(X_{1},...,X_{n};Y)=\mathcal{L}(X_{1},...,X_{n};Y) \label{c2}%
\end{equation}
for every Banach space $Y.$
\end{itemize}

It must be said that the paper \cite{popa} also presents several interesting
variants of Theorem \ref{op} (including the case $r=1$), replacing, for
example, $\Pi_{1}^{n}$ by $\Pi_{t;r}^{n}$ for some values of $t,r$ (depending
on $n$, in general).

This short note has two main goals. The first goal is to present the precise
ideal $\mathcal{I}_{p}^{n}$ that solves completely Problem \ref{has}. Our
second goal is to look for stronger variants of Theorem \ref{op}, specially
under certain special cotype assumptions. For example (using a completely
different approach from the one in \cite{popa}), we show that when $X_{j}$ has
cotype $2$ for some $j$ and $Y$ has cotype $2$ then the inclusion%

\[
\Pi_{q}(Y,Z)\circ\delta_{p}^{n}(X_{1},...,X_{n};Y)\subset\Pi_{r}^{n}%
(X_{1},...,X_{n};Z)
\]
from Theorem \ref{op} (with $\frac{1}{p}+\frac{1}{q}=\frac{1}{r}$ and
$r\in\lbrack1,2]$) can be replaced by%
\[
\Pi_{q}(Y;Z)\circ%
{\displaystyle\bigcup\nolimits_{p\geq1}}
\delta_{p}^{n}(X_{1},...,X_{n};Y)\subset\Pi_{r}^{n}(X_{1},...,X_{n};Z)
\]
for all $q\in\lbrack1,\infty),$ all $r\in\lbrack1,2]$ and all Banach spaces
$X_{1},...X_{j-1},X_{j+1},...,X_{n}$,$Z.$

In the last section we give a simple partial answer to Problem \ref{novo} by
showing that the inclusion holds whenever $n\geq\frac{p}{r}$ (in fact we do
not need that $\frac{1}{p}+\frac{1}{q}=\frac{1}{r}.$ This fact is apparently
overlooked in \cite{popa}).

\section{The solution to Problem \ref{has}}

The ideal of absolutely $p$-summing linear operators has various possible
generalizations to multi-ideals: absolutely $p$-summing multilinear operators
(\cite{am, dan}), $p$-dominated multilinear operators (\cite{irish, cg, anais,
davidarchiv}), strongly $p$-summing multilinear operators (\cite{di}),
strongly fully $p$-summing multilinear operators (\cite{port}), multiple
$p$-summing multilinear operators (\cite{collec, pv}), absolutely $p$-summing
multilinear operators by the method of linearization (\cite{note}),
$p$-semi-integral multilinear operators (\cite{CD}) and the composition ideal
generated by the ideal of absolutely $p$-summing linear operators
(\cite{prims}).

Each of these classes has its own properties and shares part of the spirit of
the linear concept of absolutely $p$-summing operators. The richness of the
multilinear theory of absolutely summing operators and multiplicity of
different possible approaches has attracted the attention of several
mathematicians in the last two decades. One of the beautiful features is that
no one of these classes shares all the desired properties of the ideal of
absolutely $p$-summing linear operators and depending on the properties that
one looks for, the \textquotedblleft natural\textquotedblright\ class to be
considered changes. However, it seems to be clear that the most popular
classes until now are the ideals of $p$-dominated multilinear operators and
multiple $p$-summing multilinear operators (but the classes that seem to be
closest to the essence of the linear ideal are, in our opinion, the classes of
strongly $p$-summing multilinear operators and strongly fully (also called
strongly multiple) $p$-summing multilinear operators). For a recent survey on
this subject we refer to \cite{QM}.

In this section we remark that the composition ideal generated by the
absolutely $p$-summing multilinear operators is precisely the class that
completely answers Problem \ref{has}.

If $\mathcal{I}$ is an operator ideal it is always possible to consider the
class%
\[
\mathcal{C}_{\mathcal{I}}^{n}:=\left\{  u\circ A:A\in\mathcal{L}^{n}\text{ and
}u\in\mathcal{I}\right\}  ,
\]
where $\mathcal{L}^{n}$ denotes the class of all continuous $n$-linear
operators between Banach spaces. So, for Banach spaces $X_{1},...,X_{n},Y,Z$,
an $n$-linear operator $T:X_{1}\times\cdots\times X_{n}\rightarrow Y$ belongs
to $\mathcal{C}_{\mathcal{I}}^{n}(X_{1},...,X_{n};Y)$ if and only if there are
a Banach space $Z$, a map $A\in\mathcal{L}(X_{1},...,X_{n};Z)$ and
$v\in\mathcal{I}(Z;Y)$ so that
\[
T(x_{1},...,x_{n})=v\left(  A(x_{1},...,x_{n}\right)  ).
\]
It is well known that $\mathcal{C}_{\mathcal{I}}^{n}$ is an ideal of
$n$-linear mappings (for details see \cite{prims}).\ The case where
$\mathcal{I}=\Pi_{1}$ was investigated in \cite{prims}, where it was shown
that this class lifts, to the multilinear setting, various important features
of the linear ideal, such as a Dvoretzky-Rogers theorem, a Grothendieck
theorem and a Lindenstrauss-Pe\l czy\'{n}ski theorem (three important
cornerstones of the linear theory of absolutely summing operators). The
solution to Problem \ref{has} is now quite simple and the proof is a
straightforward consequence of Pietsch's composition theorem for absolutely
summing linear operators:

\begin{proposition}
If $p,q\in(1,\infty)$ and $r\in\lbrack1,\infty)$ are such that $\frac{1}%
{r}=\frac{1}{p}+\frac{1}{q}$, then
\[
\Pi_{q}\circ\mathcal{C}_{\Pi_{p}}^{n}\subset\mathcal{C}_{\Pi_{r}}^{n}%
\]
for all natural number $n$.
\end{proposition}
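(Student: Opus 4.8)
The plan is to reduce everything to the linear composition theorem (\ref{uty}) by exploiting the fact that in the composition ideal $\mathcal{C}_{\Pi_p}^n$ the summing operator always appears as the \emph{outer} factor. First I would unwind the definitions: an arbitrary element of $\Pi_q\circ\mathcal{C}_{\Pi_p}^n$ is a map of the form $w\circ T$, where $T\in\mathcal{C}_{\Pi_p}^n(X_1,\dots,X_n;Y)$ and $w\in\Pi_q(Y;Z)$. By the very definition of the composition ideal recalled above, the factor $T$ splits as $T=u\circ A$ for some Banach space $W$, some $A\in\mathcal{L}(X_1,\dots,X_n;W)$ and some $u\in\Pi_p(W;Y)$.

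Next I would use associativity of composition to rewrite
\[
w\circ T=w\circ(u\circ A)=(w\circ u)\circ A.
\]
At this point the inner factor $A$ is still just a bounded $n$-linear map into $W$, while $w\circ u$ is a composition of \emph{linear} summing operators: $u\in\Pi_p(W;Y)$ followed by $w\in\Pi_q(Y;Z)$. Invoking Pietsch's linear composition theorem (\ref{uty}) with the given relation $\frac{1}{r}=\frac{1}{p}+\frac{1}{q}$ yields $w\circ u\in\Pi_r(W;Z)$.

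Finally I would read the conclusion back through the definition: since $w\circ T=(w\circ u)\circ A$ with $A\in\mathcal{L}(X_1,\dots,X_n;W)$ and $w\circ u\in\Pi_r(W;Z)$, the map $w\circ T$ is exactly a linear $\Pi_r$-operator composed with a bounded $n$-linear map, that is, $w\circ T\in\mathcal{C}_{\Pi_r}^n(X_1,\dots,X_n;Z)$. As $w$ and $T$ were arbitrary, the desired inclusion follows.

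I do not expect a genuine obstacle here: the entire content of the statement is structural rather than analytic. The key observation---and presumably the reason the authors single out this ideal as the ``right'' answer to Problem \ref{has}---is that $\mathcal{C}_{\Pi_p}^n$ keeps the summing operator on the outside, so that post-composition with another summing operator collapses, by associativity, into a single linear composition to which (\ref{uty}) applies verbatim, with no dependence on $n$. The only mild point to keep in mind is that one should check the construction produces a legitimate factorization (the intermediate space $W$ and the Banach-space-valued $n$-linear map $A$ are simply inherited unchanged from the factorization of $T$), but this is immediate.
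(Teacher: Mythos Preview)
Your proposal is correct and is exactly the argument the paper has in mind: the authors do not even spell out a proof, merely noting that the proposition ``is a straightforward consequence of Pietsch's composition theorem for absolutely summing linear operators,'' which is precisely the reduction you carry out by writing $w\circ T=(w\circ u)\circ A$ and applying (\ref{uty}) to the linear composite $w\circ u$.
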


\begin{remark}
It is worth mentioning that the polynomial version of the multi-ideal $\left(
\mathcal{C}_{\mathcal{I}}^{n}\right)  _{n=1}^{\infty}$ (which we denote by
$\left(  \mathcal{CP}_{\mathcal{I}}^{n}\right)  _{n=1}^{\infty}$) also solves
the polynomial version of Problem \ref{has}. Moreover, the polynomial ideal
$\left(  \mathcal{CP}_{\mathcal{I}}^{n}\right)  _{n=1}^{\infty}$ is a coherent
sequence and compatible with $\mathcal{I}$ (see \cite{CDM09}), reinforcing
that the composition method is an adequate method for generalizing the ideal
of absolutely summing operators.
\end{remark}

\section{Some remarks related to Problem \ref{opoi}}

Although the very simple solution to Problem \ref{has}, we do think that
Problem \ref{opoi} is interesting and now we investigate how the results from
\cite{popa} can be improved in certain special situations. In view of the
intuitive \textquotedblleft small size\textquotedblright\ (in general) of the
class $\delta_{p}^{n}$ (see \cite{bp, bpp, PAMS2} for details that justify
this intuition), in this section we look for results of the type%

\[
\Pi_{q}\circ%
{\displaystyle\bigcup\nolimits_{p\geq1}}
\delta_{p}^{n}\subset\Pi_{r;s}^{n}%
\]
for $q,r,s\in\lbrack1,\infty)$, i.e., for stronger results than those proposed
in the Problem \ref{opoi}. More precisely, for fixed Banach spaces
$X_{1},...,X_{n},Y,Z$ with certain properties we try to find $t,l,r,s$ so
that
\[
\Pi_{t;l}(Y;Z)\circ%
{\displaystyle\bigcup\nolimits_{p\geq1}}
\delta_{p}^{n}(X_{1},...,X_{n};Y)\subset\Pi_{r;s}^{n}(X_{1},...,X_{n};Z).
\]

In view of the important effect that cotype properties play in the theory of
absolutely summing operators (see for example \cite{michels, PellZ, dd, tt}),
in the next section we, in some sense, complement the results of \cite{popa}
by exploring the cotype of the Banach spaces involved.

If $Y=\mathbb{K}$ the following result gives an important and useful estimate
for the \textquotedblleft size\textquotedblright\ of the set of all
$p$-dominated scalar-valued multilinear operators:

\begin{theorem}
[Floret, Matos (1995) and P\'{e}rez-Garc\'{\i}a (2002)]\label{ff}If
$X_{1},...,X_{n}\,$\ are Banach spaces then
\[%
{\displaystyle\bigcup\nolimits_{p\geq1}}
\delta_{p}^{n}(X_{1},...,X_{n};\mathbb{K})\subset\Pi_{(1;2,...,2)}^{n}%
(X_{1},...,X_{n};\mathbb{K}).
\]

\end{theorem}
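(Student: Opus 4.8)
The plan is to fix $p\geq 1$ and prove $\delta_p^n(X_1,\ldots,X_n;\mathbb{K})\subset\Pi_{(1;2,\ldots,2)}^n(X_1,\ldots,X_n;\mathbb{K})$. Since a $p$-dominated map is automatically $p'$-dominated for every $p'\geq p$ (because $\|\cdot\|_{L^p(\mu)}\leq\|\cdot\|_{L^{p'}(\mu)}$ for a probability measure $\mu$), the classes $\delta_p^n$ increase with $p$, so one may assume $p\geq 2$. First I would invoke the Pietsch-type domination theorem for $p$-dominated multilinear operators to produce regular Borel probability measures $\mu_k$ on the weak-$\ast$ compact sets $B_{X_k^\ast}$ together with the associated factorization $T=S\circ(\iota_1,\ldots,\iota_n)$, where $\iota_k\colon X_k\to L^p(\mu_k)$ is the canonical ($p$-summing) map $x\mapsto[\varphi\mapsto\varphi(x)]$ and $S\colon L^p(\mu_1)\times\cdots\times L^p(\mu_n)\to\mathbb{K}$ is a bounded $n$-linear form whose norm is controlled by the domination constant of $T$.

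Next, given $(x_j^{(k)})_j\in\ell_2^w(X_k)$ for each $k$, set $g_j^{(k)}:=\iota_k x_j^{(k)}\in L^p(\mu_k)$ and record the two consequences of the weak-$2$ hypothesis that drive everything. Writing $M_k:=\sup_{\varphi\in B_{X_k^\ast}}\sum_j|\varphi(x_j^{(k)})|^2$, one has the pointwise bound $\sum_j|g_j^{(k)}(\varphi)|^2\leq M_k$ for $\mu_k$-a.e.\ $\varphi$ (so $\|g_j^{(k)}\|_{\infty}\leq M_k^{1/2}$) and, integrating, the summability $\sum_j\|g_j^{(k)}\|_{L^2(\mu_k)}^2\leq M_k$; equivalently, the map $\ell_2\to L^2(\mu_k)$, $e_j\mapsto g_j^{(k)}$, is Hilbert--Schmidt. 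The whole problem then reduces to the inequality $\sum_j|S(g_j^{(1)},\ldots,g_j^{(n)})|\leq C\prod_k M_k^{1/2}$, which is exactly the claimed $(1;2,\ldots,2)$-summability.

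For $p\leq n$ this is immediate and target-independent: since $\ell_2^w\subset\ell_p^w$, the $p$-dominated hypothesis already puts the diagonal sequence $(T(x_j^{(1)},\ldots,x_j^{(n)}))_j$ in $\ell_{p/n}\subset\ell_1$. The genuine difficulty — and the point where the scalar target, i.e.\ the cotype $2$ of $\mathbb{K}$, must enter — is the range $p>n$, where the only soft information is $(T(x_j))_j\in\ell_{p/n}\not\subset\ell_1$ and the crude termwise estimate $|S(g_j^{(1)},\ldots,g_j^{(n)})|\leq\|S\|\prod_k\|g_j^{(k)}\|_{L^p(\mu_k)}$ is useless, since it discards all cancellation and returns to $\ell_{p/n}$. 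Instead I would introduce Rademacher averages and apply Khinchin's inequality in each $L^p(\mu_k)$ factor, using that the target is scalar to trade the $p$-th moments of the sign sums for their $L^2$-norms; this replaces each $\|g_j^{(k)}\|_{L^p}$ by the $L^2$-datum above and lets the sum close through the Hilbert--Schmidt bound $\sum_j\|g_j^{(k)}\|_{L^2(\mu_k)}^2\leq M_k$. For $n=2$ the same effect is available more structurally via Kwapień's factorization theorem: the operator $L^p(\mu_1)\to L^{p'}(\mu_2)$ attached to $S$ factors through a Hilbert space (as $p'\leq 2\leq p$), after which $\sum_j|S(g_j^{(1)},g_j^{(2)})|$ is controlled by a single Cauchy--Schwarz estimate that converges precisely because the two factor maps act like Hilbert--Schmidt operators on the sequences $(g_j^{(k)})_j$; the general $n$ would then follow by induction, peeling off one variable at a time (with the Defant--Voigt theorem \eqref{c1} as the base mechanism for the scalar form). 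I expect the main obstacle to be exactly this step: organizing the randomization so as to isolate the diagonal term with constants uniform in the number of summands, that is, extracting the hidden Hilbert--Schmidt (trace-class-like) nature of scalar-valued dominated forms that the bare domination inequality conceals.
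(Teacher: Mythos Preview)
The paper does not supply a proof of this theorem: it is quoted as a known result, attributed to Floret--Matos \cite{FM} (complex case) and P\'erez-Garc\'{\i}a \cite{Da} (general case), and then used as a black box in the subsequent Proposition~\ref{hh}. There is therefore no in-paper proof to compare against.

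That said, your outline is headed in the right direction and matches what the very title of \cite{FM} suggests (a Khinchin-inequality argument). The reduction to $p\geq 2$ via the monotonicity of $\delta_p^n$ in $p$ is correct, the Pietsch-type factorization $T=S\circ(\iota_1,\dots,\iota_n)$ is the standard starting point, and your observation that the images $g_j^{(k)}=\iota_k x_j^{(k)}$ satisfy both a uniform $L^\infty$ bound and $\sum_j\|g_j^{(k)}\|_{L^2(\mu_k)}^2\leq M_k$ is exactly the data the argument runs on. The ``easy case'' $p\leq n$ is fine (and, as you note, works for arbitrary targets).

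Where the sketch is thin is precisely the step you flag yourself: the Khinchin/randomization argument for $p>n$ is gestured at but not carried out, and phrases like ``trade the $p$-th moments for $L^2$-norms'' or ``organizing the randomization so as to isolate the diagonal term'' are placeholders rather than a proof. To turn this into an actual argument you would need to make the Rademacher averaging explicit --- writing $\sum_j |S(g_j^{(1)},\ldots,g_j^{(n)})|$ in terms of expectations over independent sign sequences, expanding by multilinearity, and then applying Khinchin's inequality coordinatewise in each $L^p(\mu_k)$ to pass to the $L^2$ square functions $(\sum_j |g_j^{(k)}|^2)^{1/2}$, which is where the $L^\infty$ and $L^2$ information you recorded actually enters. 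The Kwapie\'n/induction alternative you mention is also a viable route, but again only sketched. In short: the strategy is sound and is, as far as one can infer from \cite{FM}, essentially the intended one; the proposal simply stops just short of the genuine work.
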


More precisely this result is due to Floret-Matos \cite{FM} for the complex
case and due to D. P\'{e}rez-Garc\'{\i}a \cite{Da} for the general case. It is
worth mentioning that, besides not explicitly mentioned, this result seems to
be essentially re-proved in \cite{popa}.

The following result is an application of the previous theorem:

\begin{proposition}
\label{hh}If $X_{1},...,X_{n},Y,Z$ are Banach spaces, then%
\[
\Pi_{s;1}(Y;Z)\circ%
{\displaystyle\bigcup\nolimits_{p\geq1}}
\delta_{p}^{n}(X_{1},...,X_{n};Y)\subset\Pi_{(s;2,...,2)}^{n}(X_{1}%
,...,X_{n};Z).
\]
for all $s\geq1.$
\end{proposition}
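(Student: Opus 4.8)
The plan is to reduce the vector-valued inclusion to the scalar-valued estimate of Theorem~\ref{ff}. The mechanism is that a dominated map sends a weakly $2$-summable family into a weakly $1$-summable family, after which the $(s;1)$-summing factor absorbs it into a strongly $s$-summable family.

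First I would unwind the composition. An operator in the left-hand side has the form $T=v\circ A$ with $v\in\Pi_{s;1}(Y;Z)$ and $A\in\delta_{p}^{n}(X_{1},\ldots,X_{n};Y)$ for some fixed $p\geq1$. Fix sequences $(x_{j}^{(k)})_{j}\in\ell_{2}^{w}(X_{k})$ for $k=1,\ldots,n$; the goal is to prove $(T(x_{j}^{(1)},\ldots,x_{j}^{(n)}))_{j}\in\ell_{s}(Z)$.

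The central step is to show that $(A(x_{j}^{(1)},\ldots,x_{j}^{(n)}))_{j}\in\ell_{1}^{w}(Y)$. Fix $\varphi\in B_{Y^{\ast}}$. Since $\delta_{p}^{n}$ is a Banach multi-ideal, left composition with the bounded linear functional $\varphi$ keeps us inside the class, namely $\varphi\circ A\in\delta_{p}^{n}(X_{1},\ldots,X_{n};\mathbb{K})$ with $\Vert\varphi\circ A\Vert_{\delta_{p}^{n}}\leq\Vert\varphi\Vert\,\Vert A\Vert_{\delta_{p}^{n}}\leq\Vert A\Vert_{\delta_{p}^{n}}$. Applying Theorem~\ref{ff}, together with the continuity of the inclusion between these Banach multi-ideals (a closed graph argument, or the explicit constant in the proof of Theorem~\ref{ff}, supplying a constant $C_{p}$ depending only on $p$ and $n$), gives $\varphi\circ A\in\Pi_{(1;2,\ldots,2)}^{n}$ with $\Vert\varphi\circ A\Vert_{\Pi_{(1;2,\ldots,2)}^{n}}\leq C_{p}\Vert A\Vert_{\delta_{p}^{n}}$ \emph{uniformly} in $\varphi$. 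Evaluating the defining $(1;2,\ldots,2)$-summing inequality on our sequences and taking the supremum over $\varphi\in B_{Y^{\ast}}$ yields
\[
\sup_{\varphi\in B_{Y^{\ast}}}\sum_{j}\left\vert \varphi\bigl(A(x_{j}^{(1)},\ldots,x_{j}^{(n)})\bigr)\right\vert \leq C_{p}\,\Vert A\Vert_{\delta_{p}^{n}}\prod_{k=1}^{n}\Vert(x_{j}^{(k)})_{j}\Vert_{w,2}<\infty,
\]
which is exactly the assertion $(A(x_{j}^{(1)},\ldots,x_{j}^{(n)}))_{j}\in\ell_{1}^{w}(Y)$.

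Finally, since $v\in\Pi_{s;1}(Y;Z)$ carries $\ell_{1}^{w}(Y)$ into $\ell_{s}(Z)$, applying $v$ termwise gives $(T(x_{j}^{(1)},\ldots,x_{j}^{(n)}))_{j}=(v(A(x_{j}^{(1)},\ldots,x_{j}^{(n)})))_{j}\in\ell_{s}(Z)$, with the norm estimate chaining through $\Vert v\Vert_{\Pi_{s;1}}$; this establishes membership in $\Pi_{(s;2,\ldots,2)}^{n}(X_{1},\ldots,X_{n};Z)$. I expect the only delicate point to be the uniformity in $\varphi$, which is secured precisely by the ideal-norm bound $\Vert\varphi\circ A\Vert_{\delta_{p}^{n}}\leq\Vert\varphi\Vert\,\Vert A\Vert_{\delta_{p}^{n}}$; the remaining steps are a routine chaining of the defining inequalities of the three classes involved.
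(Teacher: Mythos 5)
Your proof is correct and follows essentially the same route as the paper's: compose with functionals $\varphi\in B_{Y^{\ast}}$, invoke Theorem~\ref{ff} to place $\varphi\circ A$ in $\Pi_{(1;2,\ldots,2)}^{n}(X_{1},\ldots,X_{n};\mathbb{K})$, deduce that $(A(x_{j}^{(1)},\ldots,x_{j}^{(n)}))_{j}$ is weakly $1$-summable, and finish by applying the $(s;1)$-summing factor. The only difference is that you make the uniformity in $\varphi$ explicit through ideal-norm estimates, a point the paper leaves implicit (it can equally be settled by the standard closed-graph characterization of $\ell_{1}^{w}(Y)$, under which pointwise summability of $\sum_{j}\vert\varphi(y_{j})\vert$ for every $\varphi\in Y^{\ast}$ already yields the finite supremum over $B_{Y^{\ast}}$).
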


\begin{proof}
Let $T\in\Pi_{(s;1)}(Y,Z)$ and $R\in%
{\displaystyle\bigcup\nolimits_{p\geq1}}
\delta_{p}^{n}(X_{1},...,X_{n};Y).$ Consider $(x_{j}^{(k)})_{j=1}^{\infty}%
\in\ell_{2}^{w}\left(  X_{k}\right)  $ for all $k=1,...,n$. If $\varphi\in
Y^{\ast}$ from Theorem \ref{ff} we have
\[
\varphi\circ R\in%
{\displaystyle\bigcup\nolimits_{p\geq1}}
\delta_{p}^{n}(X_{1},...,X_{n};\mathbb{K})\subset\Pi_{(1;2,...,2)}^{n}%
(X_{1},...,X_{n};\mathbb{K}).
\]
Hence%
\[
\left(  \varphi\left(  R(x_{j}^{(1)},...,x_{j}^{(n)})\right)  \right)
_{j=1}^{\infty}\in\ell_{1}.
\]
We thus conclude that $\left(  R(x_{j}^{(1)},...,x_{j}^{(n)})\right)
_{j=1}^{\infty}\in\ell_{1}^{w}\left(  Y\right)  $ and so
\[
\left(  T\left(  R(x_{j}^{(1)},...,x_{j}^{(n)})\right)  \right)
_{j=1}^{\infty}\in\ell_{s}\left(  Z\right)  ,
\]
because $T\in\Pi_{s;1}(Y,Z).$
\end{proof}

When $X_{1}=\cdots=X_{n}$ are $\mathcal{L}_{\infty}$ spaces we have a quite
stronger result:

\begin{proposition}
If $Y,Z$ are Banach spaces and $X_{1}=\cdots=X_{n}$ are $\mathcal{L}_{\infty}$
spaces, then%
\[
\Pi_{(s;r)}(Y;Z)\circ\mathcal{L}(X_{1},...,X_{n};Y)\subset\Pi_{(s;2r,...,2r)}%
^{n}(X_{1},...,X_{n};Z)
\]
for all $s\geq r\geq1.$
\end{proposition}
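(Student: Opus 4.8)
The plan is to follow the same composition-factorization scheme as in the proof of Proposition~\ref{hh}, replacing the scalar estimate coming from Theorem~\ref{ff} by a genuine vector-valued coincidence result that is available precisely when the domains are $\mathcal{L}_\infty$ spaces. First I would recall the classical fact that if $X_1=\cdots=X_n$ are $\mathcal{L}_\infty$ spaces, then \emph{every} continuous multilinear operator $A\in\mathcal{L}(X_1,\dots,X_n;W)$ is already multiple $(2;2)$-summing, and more generally $(2r;2r,\dots,2r)$-summing for any $r\geq 1$; this is a multilinear incarnation of the Grothendieck/Defant--Voigt phenomenon on $\mathcal{L}_\infty$ spaces and is exactly what makes the hypothesis $\mathcal{L}(X_1,\dots,X_n;Y)$ (rather than $\bigcup_p\delta_p^n$) sufficient here. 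Concretely, for $(x_j^{(k)})_{j}\in\ell_{2r}^w(X_k)$ with $k=1,\dots,n$, one gets $\bigl(A(x_j^{(1)},\dots,x_j^{(n)})\bigr)_j\in\ell_{2r}(W)$ with a norm control by the product of the weak $\ell_{2r}$-norms.

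Next I would set $W=Y$ and unwind the composition. Take $T\in\Pi_{(s;r)}(Y;Z)$ and $A\in\mathcal{L}(X_1,\dots,X_n;Y)$, and fix sequences $(x_j^{(k)})_j\in\ell_{2r}^w(X_k)$ for each $k$. The multilinear summing step produces $\bigl(A(x_j^{(1)},\dots,x_j^{(n)})\bigr)_j\in\ell_{2r}(Y)$. The key bridge is then the elementary embedding $\ell_{2r}(Y)\subset\ell_r^w(Y)$: any strongly $2r$-summable sequence in $Y$ is, in particular, weakly $r$-summable, because for every $\varphi\in B_{Y^\ast}$ one has $\sum_j|\varphi(y_j)|^r\leq\sum_j\|y_j\|^r$, and $(\|y_j\|)_j\in\ell_{2r}\subset\ell_r$ since $r\leq 2r$. (Here the precise exponent matching $2r\mapsto r$ is what forces the target inner index to be $2r$ and the linear operator to be $(s;r)$-summing.) Feeding the resulting $\ell_r^w(Y)$ sequence into $T\in\Pi_{(s;r)}(Y;Z)$ yields $\bigl(T(A(x_j^{(1)},\dots,x_j^{(n)}))\bigr)_j\in\ell_s(Z)$, which is exactly the conclusion $TA\in\Pi_{(s;2r,\dots,2r)}^n(X_1,\dots,X_n;Z)$.

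The main obstacle, and the step I would pin down most carefully, is the first one: justifying that on $\mathcal{L}_\infty$ spaces every bounded $n$-linear map is $(2r;2r,\dots,2r)$-summing. For $r=1$ this is the well-known coincidence $\mathcal{L}(X_1,\dots,X_n;Y)=\Pi_{(2;2,\dots,2)}^n(X_1,\dots,X_n;Y)$ for $\mathcal{L}_\infty$ domains (a consequence of Grothendieck's theorem applied coordinatewise, or of the fact that the identity on an $\mathcal{L}_\infty$ space is $(2;1)$-summing combined with the $\mathcal{L}_\infty$ Khinchin-type estimates). For general $r\geq 1$ one should either cite the corresponding $(2r;2r)$-summing coincidence directly or deduce it by interpolation/monotonicity of the summing indices on $\mathcal{L}_\infty$ spaces; I would be careful that the inclusion used is in the correct direction and that the weak-$\ell_{2r}$ inputs are sent to strong-$\ell_{2r}$ outputs with the right constant. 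Once that ingredient is secured, everything else is the routine embedding $\ell_{2r}\hookrightarrow\ell_r^w$ followed by the definition of $(s;r)$-summing, exactly as in the proof of Proposition~\ref{hh}.
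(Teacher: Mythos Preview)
Your proposal has two genuine gaps, and both occur at the steps you yourself flag as the delicate ones.

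First, the vector-valued coincidence you invoke is false. It is \emph{not} true that every bounded $n$-linear map from $\mathcal{L}_\infty$ spaces into an arbitrary Banach space $Y$ is $(2r;2r,\dots,2r)$-summing, even for $r=1$ and $n=2$. Take $T:c_0\times c_0\to c_0$, $T(x,y)=(x_j y_j)_j$, and feed in $x_j^{(1)}=x_j^{(2)}=e_j$; the unit vectors lie in $\ell_2^w(c_0)$ but $T(e_j,e_j)=e_j$ gives $\|T(e_j,e_j)\|=1$ for all $j$, so the output is not in $\ell_2(c_0)$. The Grothendieck-type coincidence on $\mathcal{L}_\infty$ domains that the paper relies on is a \emph{scalar-valued} statement: $\mathcal{L}(X_1,\dots,X_n;\mathbb{K})=\Pi_{(r;2r,\dots,2r)}^n(X_1,\dots,X_n;\mathbb{K})$ (\cite[Theorem~3.15]{michels}). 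It does not lift to arbitrary targets.

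Second, the ``elementary embedding'' $\ell_{2r}(Y)\subset\ell_r^w(Y)$ is simply backwards: for $p\leq q$ one has $\ell_p\subset\ell_q$, not the reverse, so $(\|y_j\|)_j\in\ell_{2r}$ does \emph{not} imply $(\|y_j\|)_j\in\ell_r$. Already for $Y=\mathbb{K}$ and $r=1$ this would say $\ell_2\subset\ell_1$.

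The paper sidesteps both issues by never claiming a strong-$\ell_{2r}$ output. Instead it composes $R$ with an arbitrary $\varphi\in Y^\ast$, applies the scalar coincidence to $\varphi\circ R$ to get $(\varphi(R(x_j^{(1)},\dots,x_j^{(n)})))_j\in\ell_r$, and reads this off directly as $(R(x_j^{(1)},\dots,x_j^{(n)}))_j\in\ell_r^w(Y)$. Then $T\in\Pi_{(s;r)}(Y;Z)$ finishes the job. The mechanism is exactly that of Proposition~\ref{hh}: a scalar-valued summing coincidence for the multilinear part, followed by the definition of weak summability, followed by the linear summing operator. Your attempt to upgrade the first step to a vector-valued statement is precisely what fails.
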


\begin{proof}
Let $T\in\Pi_{(s;r)}(Y,Z)$ and $R\in\mathcal{L}(X_{1},...,X_{n};Y).$ Consider
$(x_{j}^{(k)})_{j=1}^{\infty}\in\ell_{2r}^{w}\left(  X_{k}\right)  $ for all
$k=1,...,n$. If $\varphi\in Y^{\ast}$ we have (from \cite[Theorem
3.15]{michels})
\[
\varphi\circ R\in\mathcal{L}(X_{1},...,X_{n};\mathbb{K})=\Pi_{(r;2r,...,2r)}%
^{n}(X_{1},...,X_{n};\mathbb{K}).
\]
Hence%
\[
\left(  \varphi\left(  R(x_{j}^{(1)},...,x_{j}^{(n)})\right)  \right)
_{j=1}^{\infty}\in\ell_{r}%
\]
and thus $\left(  R(x_{j}^{(1)},...,x_{j}^{(n)})\right)  _{j=1}^{\infty}%
\in\ell_{r}^{w}\left(  Y\right)  .$ Since $T\in\Pi_{(s;r)}(Y,Z),$ we conclude
that
\[
\left(  T\left(  R(x_{j}^{(1)},...,x_{j}^{(n)})\right)  \right)
_{j=1}^{\infty}\in\ell_{s}\left(  Z\right)  .
\]

\end{proof}

\section{Exploring the cotype of the spaces involved}

In this section we will need, as auxiliary results, some inclusions involving
cotype and absolutely summing multilinear operators. The following results can
be found in \cite[Theorem 3 and Remark 2]{jun} and \cite[Theorem 3.8]%
{michels}, by using complex interpolation, and \cite[Corollary 4.6]{popa3}:

\begin{theorem}
[Inclusion Theorem]\label{tt} Let $X_{1},...,X_{n}$ be Banach spaces with
cotype $s$ and $n\geq2$ be a positive integer:

(i) If $s=$ $2,$ then
\[
\Pi_{q;q}^{n}(X_{1},...,X_{n};Y)\subset\Pi_{p;p}^{n}(X_{1},...,X_{n};Y)
\]
holds true for $1\leq p\leq q\leq2$.

(ii) If $s>2,$ then
\[
\Pi_{q;q}^{n}(X_{1},...,X_{n};Y)\subset\Pi_{p;p}^{n}(X_{1},...,X_{n};Y)
\]
holds true for $1\leq p\leq q<s^{\ast}<2$.
\end{theorem}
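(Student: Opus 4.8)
The plan is to prove the Inclusion Theorem by reducing the multilinear statement to the known linear inclusion theorem for spaces of cotype $2$ (respectively cotype $s>2$), using a single-coordinate fixing argument. Recall that for a \emph{linear} operator $u$ on a space of cotype $2$, one has $\Pi_{q}(X;Y)\subset\Pi_{p}(X;Y)$ for $1\leq p\leq q\leq 2$ (this is the linear Bennett--Carl type inclusion theorem). The idea is that an $n$-linear absolutely summing operator, when all but one of its coordinates are frozen at a fixed sequence of weakly $q$-summable vectors, should inherit summability in the remaining coordinate, so that the linear result can be applied slot by slot.

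\medskip

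First I would take $T\in\Pi_{q;q}^{n}(X_{1},\dots,X_{n};Y)$ and sequences $(x_{j}^{(k)})_{j}\in\ell_{p}^{w}(X_{k})$ for each $k$; since $p\leq q$ we certainly have $\ell_{p}^{w}(X_{k})\subset\ell_{q}^{w}(X_{k})$, so $T$ already sends these into $\ell_{q}(Y)$, and the task is to upgrade $\ell_{q}(Y)$ to $\ell_{p}(Y)$. The natural route is an inductive/iterative scheme on the coordinates: treat one variable at a time. For a fixed coordinate, say the first, and fixed vectors in the remaining coordinates, the map $x\mapsto T(x,x_{j}^{(2)},\dots,x_{j}^{(n)})$ is linear, and one checks that the $q$-summing behaviour of $T$ forces an associated linear operator (built from $X_{1}$ into a suitable vector-valued sequence space, or via the Pietsch domination/factorization machinery) to be $q$-summing. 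Then, because $X_{1}$ has cotype $2$ (case (i)) or cotype $s>2$ with $q<s^{\ast}$ (case (ii)), the linear inclusion theorem improves this to $p$-summing, lowering the exponent in the first slot from $q$ to $p$. Iterating over all $n$ coordinates lowers every exponent from $q$ to $p$ and yields $T\in\Pi_{p;p}^{n}$.

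\medskip

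Concretely I expect to invoke the cited sources directly: the reduction to the linear inclusion theorem and the coordinatewise interpolation argument are exactly what \cite[Theorem 3 and Remark 2]{jun} and \cite[Theorem 3.8]{michels} carry out (the latter via complex interpolation between the trivial endpoint and the linear inclusion), with the cotype $s>2$ refinement and the sharp range $q<s^{\ast}$ coming from \cite[Corollary 4.6]{popa3}. So rather than reprove these, I would cite them and indicate how they assemble: case (i) is the direct cotype $2$ statement, and case (ii) follows by the same mechanism once the constraint $q<s^{\ast}<2$ (which is precisely what makes the linear cotype $s$ inclusion theorem applicable in each coordinate) is imposed.

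\medskip

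The main obstacle is the coordinatewise reduction itself: one must verify that freezing all but one variable genuinely produces a \emph{linear} $q$-summing operator to which the scalar linear inclusion theorem applies, and that the improvement obtained slot by slot can be chained without losing control of the weak sequences in the other coordinates. The delicate point is that after improving the first coordinate the remaining coordinates must still be handled with their original weak-$\ell_{p}$ sequences, so one needs a uniform estimate (a Pietsch-type domination inequality valid simultaneously across coordinates) to make the iteration close; this is where the cotype hypothesis on \emph{all} of $X_{1},\dots,X_{n}$ — rather than just one — becomes essential, and it is the reason the interpolation argument of \cite{michels} is invoked rather than a naive one-variable-at-a-time proof.
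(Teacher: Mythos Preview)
The paper does not give its own proof of this Inclusion Theorem: it is stated as a known result and attributed directly to \cite[Theorem 3 and Remark 2]{jun}, \cite[Theorem 3.8]{michels} (via complex interpolation), and \cite[Corollary 4.6]{popa3}. Your proposal ultimately does the same---after sketching a coordinatewise-reduction heuristic you explicitly defer to those same references---so there is nothing to compare beyond noting that your informal description of what those papers do (linear inclusion in each slot, glued together by interpolation, with the cotype hypothesis on all $X_j$ needed to iterate) is accurate in spirit.
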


As we will see in the next results, a far-reaching version (of independent
interest) of this theorem is valid. This result uses arguments from
\cite{blasco} and, in essence is contained in \cite{blasco}:

\begin{theorem}
\label{klmm}If $X_{1}$ has cotype $2$ and $1\leq p\leq s\leq2,$ then
\[
\Pi_{(s;s,t,...,t)}^{n}(X_{1},...,X_{n};Z)\subset\Pi_{(p;p,t,...,t)}^{n}%
(X_{1},...,X_{n};Z)
\]
for all $X_{2},...,X_{n}$,$Z$ and all $t\geq1.$ In particular%
\begin{equation}
\Pi_{(s;s,...,s)}^{n}(X_{1},...,X_{n};Z)\subset\Pi_{(p;p,s,...,s)}^{n}%
(X_{1},...,X_{n};Z)\subset\Pi_{(p;p,p,...,p)}^{n}(X_{1},...,X_{n};Z).
\label{uuu}%
\end{equation}

\end{theorem}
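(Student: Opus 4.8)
The plan is to reduce the $n$-linear statement to a summability improvement in the first variable alone, and then to feed in the cotype $2$ of $X_{1}$ exactly in the spirit of \cite{blasco}. First I would fix an operator $T\in\Pi_{(s;s,t,\dots,t)}^{n}(X_{1},\dots,X_{n};Z)$ and arbitrary sequences $(x_{j}^{(k)})_{j}\in\ell_{t}^{w}(X_{k})$ for $k=2,\dots,n$. Freezing these coordinates produces, for each $j$, a bounded linear operator $u_{j}:=T(\,\cdot\,,x_{j}^{(2)},\dots,x_{j}^{(n)})\in\mathcal{L}(X_{1};Z)$, and the hypothesis on $T$ becomes the \emph{diagonal} estimate
\[
\Big(\sum_{j}\big\|u_{j}(x_{j})\big\|^{s}\Big)^{1/s}\le C\,\Big(\prod_{k=2}^{n}\|(x_{j}^{(k)})_{j}\|_{w,t}\Big)\,\|(x_{j})_{j}\|_{w,s},
\]
valid for every $(x_{j})_{j}\in\ell_{s}^{w}(X_{1})$, where $C$ is the $(s;s,t,\dots,t)$-summing norm of $T$ and $\|\cdot\|_{w,r}$ denotes the weak $\ell_{r}$ norm. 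The target is the same inequality with $s$ replaced by $p$ in both the outer power and the first weak norm, and with a constant independent of the frozen sequences; taking suprema over all admissible data then yields $T\in\Pi_{(p;p,t,\dots,t)}^{n}$.

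The heart of the matter is the passage from the $s$-estimate to the $p$-estimate, and this is precisely where the cotype $2$ of $X_{1}$ together with $1\le p\le s\le2$ is used. Since only the first slot is being lowered, one is essentially facing the linear inclusion theorem for a cotype-$2$ domain, $\Pi_{s}(X_{1};W)\subseteq\Pi_{p}(X_{1};W)$ for $1\le p\le s\le2$ (the $n=1$ case, classical; see \cite{djt} and the arguments in \cite{jun,michels,blasco}), but applied to the diagonal family $(u_{j})_{j}$ rather than to a single operator. I would therefore adapt the cotype-$2$ argument of \cite{blasco} to the diagonal sums above, obtaining
\[
\Big(\sum_{j}\big\|u_{j}(x_{j})\big\|^{p}\Big)^{1/p}\le K\,C\,\Big(\prod_{k=2}^{n}\|(x_{j}^{(k)})_{j}\|_{w,t}\Big)\,\|(x_{j})_{j}\|_{w,p},
\]
with $K$ depending only on $p$, $s$ and the cotype-$2$ constant of $X_{1}$. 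The hard part, and the main obstacle, is to keep $K$ uniform in the frozen sequences $(x_{j}^{(k)})_{j}$: this is exactly what the diagonal formulation buys, since the frozen data enter only through the operators $u_{j}$ and are absorbed into the constant via their weak $\ell_{t}$ norms (in contrast to a full double-sum formulation, which the hypothesis on $T$ does \emph{not} control). Once this uniformity is secured, the main inclusion follows by taking the supremum over all admissible frozen sequences and over all $(x_{j})_{j}\in\ell_{p}^{w}(X_{1})$.

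Finally, the chain (\ref{uuu}) follows formally. Specializing the main inclusion to $t=s$ gives the first inclusion $\Pi_{(s;s,\dots,s)}^{n}\subseteq\Pi_{(p;p,s,\dots,s)}^{n}$. For the second inclusion I would only use the elementary monotonicity of the summing classes in the inner exponents: since $p\le s$ forces $\ell_{p}^{w}(X_{k})\subseteq\ell_{s}^{w}(X_{k})$, every $(p;p,s,\dots,s)$-summing operator is tested on a larger family of sequences than a $(p;p,p,\dots,p)$-summing operator, hence is automatically $(p;p,p,\dots,p)$-summing. This last step needs no cotype hypothesis at all.
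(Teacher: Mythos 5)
Your reduction to the frozen ``diagonal'' estimate and your treatment of the chain (\ref{uuu}) are both fine, but the proposal has a genuine gap exactly where you yourself locate ``the heart of the matter'': the passage from the $s$-estimate to the $p$-estimate is never carried out, only asserted as an unspecified ``adaptation of the cotype-$2$ argument of \cite{blasco}''. That step is the entire mathematical content of the theorem---it is the only place where cotype $2$ and the restriction $1\leq p\leq s\leq2$ enter---and your framing of it as the linear inclusion theorem ``applied to the diagonal family $(u_{j})_{j}$'' does not correspond to anything the classical linear proofs deliver: Pietsch domination, or composition with the $(2,1)$-summing identity, applies to a single operator, not to a diagonal estimate involving infinitely many different operators $u_{j}$, so there is no off-the-shelf statement to invoke and no substitute argument is supplied.

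What the paper actually takes from \cite{blasco} is a factorization of \emph{sequences}, not of operators: if $X_{1}$ has cotype $2$ and $1\leq p\leq s\leq2$, then $\ell_{p}^{w}(X_{1})=\ell_{r}\cdot\ell_{s}^{w}(X_{1})$ where $\frac{1}{r}+\frac{1}{s}=\frac{1}{p}$; that is, every $(x_{j})_{j}\in\ell_{p}^{w}(X_{1})$ can be written as $x_{j}=\alpha_{j}y_{j}$ with $(\alpha_{j})_{j}\in\ell_{r}$ and $(y_{j})_{j}\in\ell_{s}^{w}(X_{1})$. With this lemma in hand, the theorem is immediate and your diagonal reduction is unnecessary: for $T\in\Pi_{(s;s,t,\dots,t)}^{n}(X_{1},\dots,X_{n};Z)$ and admissible sequences,
\begin{align*}
\Bigl(\sum_{j}\bigl\Vert T(x_{j}^{(1)},\dots,x_{j}^{(n)})\bigr\Vert^{p}\Bigr)^{1/p}
&=\Bigl(\sum_{j}\vert\alpha_{j}\vert^{p}\bigl\Vert T(y_{j},x_{j}^{(2)},\dots,x_{j}^{(n)})\bigr\Vert^{p}\Bigr)^{1/p}\\
&\leq\Bigl(\sum_{j}\vert\alpha_{j}\vert^{r}\Bigr)^{1/r}\Bigl(\sum_{j}\bigl\Vert T(y_{j},x_{j}^{(2)},\dots,x_{j}^{(n)})\bigr\Vert^{s}\Bigr)^{1/s}<\infty,
\end{align*}
by H\"older's inequality (note $\frac{p}{r}+\frac{p}{s}=1$), the last factor being finite because $(y_{j})_{j}\in\ell_{s}^{w}(X_{1})$ and $T$ is $(s;s,t,\dots,t)$-summing. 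This also shows that the uniformity-of-constants issue you flag as ``the hard part'' is in fact a non-issue: the scalars $\alpha_{j}$ are pulled out of $T$ by multilinearity before any summing hypothesis is used, and the resulting bound is just the summing norm of $T$ times the factorization constant of the sequence. Your proposal becomes a correct proof (essentially identical to the paper's) once this factorization lemma is stated and invoked; as written, the key mechanism is missing.
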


\begin{proof}
Since $X_{1}$ has cotype $2$, then, using results from \cite{blasco}, we have
\[
\ell_{p}^{w}(X_{1})=\ell_{r}\ell_{s}^{w}(X_{1})
\]
for
\[
\frac{1}{r}+\frac{1}{s}=\frac{1}{p}.
\]
Let $(x_{k}^{(1)})_{k=1}^{\infty}\in\ell_{p}^{w}(X_{1})$ and $(x_{k}%
^{(i)})_{k=1}^{\infty}\in\ell_{t}^{w}(X_{i})$ for $i=2,...,n$. So $x_{k}%
^{(1)}=\alpha_{k}y_{k},$ with $\left(  \alpha_{k}\right)  _{k=1}^{\infty}%
\in\ell_{r}$ and $\left(  y_{k}\right)  _{k=1}^{\infty}\in\ell_{s}$ for all
$k$. If $A\in\Pi_{(s;s,t,...,t)}^{n}(X_{1},...,X_{n};Z)$, then%
\begin{align*}
\left(
{\textstyle\sum\limits_{j=1}^{\infty}}
\left\Vert A(x_{j}^{(1)},...,x_{j}^{(n)}\right\Vert ^{p}\right)  ^{1/p}  &
=\left(
{\textstyle\sum\limits_{j=1}^{\infty}}
\left\Vert \alpha_{j}A(y_{j},x_{j}^{(2)},...,x_{j}^{(n)}\right\Vert
^{p}\right)  ^{1/p}\\
&  \leq\left(
{\textstyle\sum\limits_{j=1}^{\infty}}
\left\vert \alpha_{j}\right\vert ^{r}\right)  ^{1/r}\left(
{\textstyle\sum\limits_{j=1}^{\infty}}
\left\Vert A(y_{j},x_{j}^{(2)},...,x_{j}^{(n)}\right\Vert ^{s}\right)
^{1/s}<\infty
\end{align*}
and the proof is done.
\end{proof}

\begin{remark}
Note that using the inclusion theorem for absolutely summing multilinear
operators and Theorem \ref{klmm} we conclude that, in fact,%
\[
\Pi_{(s;s,...,s)}^{n}(X_{1},...,X_{n};Z)=\Pi_{(p;p,s,...,s)}^{n}%
(X_{1},...,X_{n};Z)
\]
under the hypotheses of Theorem \ref{klmm}.
\end{remark}

A similar result holds for spaces with cotype greater than $2$:

\begin{theorem}
\label{m2}If $X_{1}$ has cotype $s>2$ and $1\leq p\leq q<s^{\ast},$ then
\[
\Pi_{(q;q,t,...,t)}^{n}(X_{1},...,X_{n};Z)\subset\Pi_{(p;p,t,...,t)}^{n}%
(X_{1},...,X_{n};Z)
\]
for all $X_{2},...,X_{n}$,$Z$ and all $t\geq1.$ In particular%
\[
\Pi_{(q;q,...,q)}^{n}(X_{1},...,X_{n};Z)\subset\Pi_{(p;p,q,...,q)}^{n}%
(X_{1},...,X_{n};Z)\subset\Pi_{(p;p,p,...,p)}^{n}(X_{1},...,X_{n};Z).
\]

\end{theorem}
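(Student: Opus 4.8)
The plan is to follow verbatim the strategy of the proof of Theorem~\ref{klmm}, replacing the cotype~$2$ factorization of weak sequence spaces by its analogue valid for cotype $s>2$. The single structural ingredient I would borrow from \cite{blasco} is the factorization: since $X_{1}$ has cotype $s$ and $1\leq p\leq q<s^{\ast}$, the arguments of O. Blasco \textit{et al.} yield
\[
\ell_{p}^{w}(X_{1})=\ell_{r}\,\ell_{q}^{w}(X_{1}),\qquad \frac{1}{r}+\frac{1}{q}=\frac{1}{p}.
\]
Here the constraint $q<s^{\ast}$ plays exactly the role that $s\leq2$ played for cotype $2$ in Theorem~\ref{klmm}, and it is the only place where the cotype of $X_{1}$ and the threshold $s^{\ast}$ actually enter; everything downstream is a routine H\"older estimate.

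Granting this, let $A\in\Pi_{(q;q,t,\ldots,t)}^{n}(X_{1},\ldots,X_{n};Z)$ and take $(x_{k}^{(1)})_{k=1}^{\infty}\in\ell_{p}^{w}(X_{1})$ together with $(x_{k}^{(i)})_{k=1}^{\infty}\in\ell_{t}^{w}(X_{i})$ for $i=2,\ldots,n$. I would write $x_{k}^{(1)}=\alpha_{k}y_{k}$ with $(\alpha_{k})_{k=1}^{\infty}\in\ell_{r}$ and $(y_{k})_{k=1}^{\infty}\in\ell_{q}^{w}(X_{1})$, then pull out the scalar $\alpha_{j}$ and apply H\"older's inequality to $\bigl(\sum_{j}\|A(x_{j}^{(1)},\ldots,x_{j}^{(n)})\|^{p}\bigr)^{1/p}$ with the conjugate exponents $r/p$ and $q/p$ (conjugate precisely because $\tfrac1r+\tfrac1q=\tfrac1p$). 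This bounds the sum by
\[
\Bigl(\sum_{j}|\alpha_{j}|^{r}\Bigr)^{1/r}\Bigl(\sum_{j}\|A(y_{j},x_{j}^{(2)},\ldots,x_{j}^{(n)})\|^{q}\Bigr)^{1/q}<\infty,
\]
the first factor being finite since $(\alpha_{k})\in\ell_{r}$, and the second since $A\in\Pi_{(q;q,t,\ldots,t)}^{n}$ while $(y_{k})\in\ell_{q}^{w}(X_{1})$ and $(x_{k}^{(i)})\in\ell_{t}^{w}(X_{i})$. Hence $A\in\Pi_{(p;p,t,\ldots,t)}^{n}$, which is the claimed inclusion.

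For the ``in particular'' chain, the first inclusion $\Pi_{(q;q,\ldots,q)}^{n}\subset\Pi_{(p;p,q,\ldots,q)}^{n}$ is just the main statement specialized to $t=q$. The second inclusion $\Pi_{(p;p,q,\ldots,q)}^{n}\subset\Pi_{(p;p,\ldots,p)}^{n}$ needs no cotype hypothesis: since $p\leq q$ one has the continuous inclusion $\ell_{p}^{w}(X_{i})\subset\ell_{q}^{w}(X_{i})$ for each $i=2,\ldots,n$, so any family in $\ell_{p}^{w}(X_{i})$ is already admissible input for a $(p;p,q,\ldots,q)$-summing operator, producing an $\ell_{p}(Z)$ output. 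I expect the main obstacle to be purely bibliographic bookkeeping, namely confirming that the factorization $\ell_{p}^{w}(X_{1})=\ell_{r}\,\ell_{q}^{w}(X_{1})$ is indeed available from \cite{blasco} throughout the whole range $1\le p\le q<s^{\ast}$ for a space of cotype $s>2$; once that is pinned down, the rest is identical to Theorem~\ref{klmm}.
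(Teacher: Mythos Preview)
Your proposal is correct and is essentially identical to the paper's own proof: the paper invokes the factorization $\ell_{p}^{w}(X_{1})=\ell_{r}\,\ell_{q}^{w}(X_{1})$ from \cite{blasco} valid for cotype $s$ in the range $1\le p\le q<s^{\ast}$, and then simply refers the reader back to the argument of Theorem~\ref{klmm}. You have written out those details in full (including the ``in particular'' chain), but the approach is the same.
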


\begin{proof}
Since $X_{1}$ has cotype $s$, then%
\[
\ell_{p}^{w}(X_{1})=\ell_{r}\ell_{q}^{w}(X_{1})
\]
whenever $1\leq p\leq q<s^{\ast}$ with
\[
\frac{1}{r}+\frac{1}{q}=\frac{1}{p}%
\]
and the proof follows the lines of the proof of Theorem \ref{klmm}.
\end{proof}

\begin{remark}
\label{rem}Obviously, Theorems \ref{klmm} and \ref{m2} have an analogous
version when some $X_{j}$ (instead of necessarily $X_{1}$) has cotype $2.$
\end{remark}

The following result can be found in \cite[Theorem 3.10]{pv}:

\begin{proposition}
[P\'{e}rez-Garc\'{\i}a and Villanueva, 2003]\label{kl}If $Y$ has cotype finite
cotype $s$, then%
\[%
{\displaystyle\bigcup\nolimits_{p\geq1}}
\delta_{p}^{n}(X_{1},...,X_{n};Y)\subset\Pi_{(s;2,...,2)}^{\text{mult,}%
n}(X_{1},...,X_{n};Y)\subset\Pi_{(s;2,...,2)}^{n}(X_{1},...,X_{n};Z)
\]
for all Banach spaces $X_{1},...,X_{n},Z.$
\end{proposition}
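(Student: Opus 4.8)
The plan is to establish the two inclusions separately; the second is immediate and the first carries all the content. For the second inclusion, observe that multiple summability is a condition over \emph{all} multi-indices $(j_1,\dots,j_n)$, whereas absolute summability only concerns the diagonal $j_1=\cdots=j_n$. Hence if $T$ is multiple $(s;2,\dots,2)$-summing and $(x_j^{(k)})_j\in\ell_2^w(X_k)$ for each $k$, then $\sum_j\|T(x_j^{(1)},\dots,x_j^{(n)})\|^s$ is a subsum of the convergent $\sum_{j_1,\dots,j_n}\|T(x_{j_1}^{(1)},\dots,x_{j_n}^{(n)})\|^s$, so $T$ is absolutely $(s;2,\dots,2)$-summing with the same constant. (Here the target space in the last term should read $Y$; the $Z$ is a typo, the inclusions all concerning maps into $Y$.)

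For the first inclusion I would fix $p\ge1$ and $T\in\delta_p^n(X_1,\dots,X_n;Y)$ and combine three ingredients: the Pietsch domination theorem for dominated maps, the cotype of $Y$, and Khinchine's inequality. By the domination theorem there are a constant $C$ and Borel probability measures $\mu_k$ on $(B_{X_k^\ast},w^\ast)$ with $\|T(u_1,\dots,u_n)\|\le C\prod_k\big(\int_{B_{X_k^\ast}}|\varphi(u_k)|^p\,d\mu_k(\varphi)\big)^{1/p}$. The decisive structural step is an \emph{iterated cotype inequality}: if $Y$ has cotype $s$ then for every finitely supported family $(y_{j_1,\dots,j_n})$ in $Y$,
\[
\Big(\sum_{j_1,\dots,j_n}\|y_{j_1,\dots,j_n}\|^s\Big)^{1/s}
\le K\,\Big(\mathbb{E}_\varepsilon\Big\|\sum_{j_1,\dots,j_n}
\varepsilon^{(1)}_{j_1}\cdots\varepsilon^{(n)}_{j_n}\,y_{j_1,\dots,j_n}\Big\|^2\Big)^{1/2},
\]
with $n$ independent Rademacher families and $K$ depending only on $n,s$ and the cotype constant of $Y$. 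I would prove this by induction on $n$: peel off the last index by ordinary cotype of $Y$, and then apply cotype to the vectors $w_i=\sum_j\delta_j y_{ij}$ regarded as elements of the Bochner space $L_2(Y)$, which again has cotype $s$ because $2\le s$; the product structure of the signs is precisely what lets the induction close.

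Applying this with $y_{j_1,\dots,j_n}=T(x_{j_1}^{(1)},\dots,x_{j_n}^{(n)})$ and using multilinearity to rewrite the inner Rademacher sum as $T(\sum_{j_1}\varepsilon^{(1)}_{j_1}x_{j_1}^{(1)},\dots,\sum_{j_n}\varepsilon^{(n)}_{j_n}x_{j_n}^{(n)})$, the domination inequality together with the independence of the families $\varepsilon^{(k)}$ factors the expectation into a product over $k$ of terms $\mathbb{E}_{\varepsilon^{(k)}}\big(\int|\varphi(\sum_j\varepsilon^{(k)}_j x_j^{(k)})|^p\,d\mu_k\big)^{2/p}$. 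Each such factor I would control by Khinchine's inequality in $L_p(\mu_k)$ combined with Kahane's inequality: it is comparable to $\big\|(\sum_j|\varphi(x_j^{(k)})|^2)^{1/2}\big\|_{L_p(\mu_k)}^2$, and since $(\sum_j|\varphi(x_j^{(k)})|^2)^{1/2}$ is bounded pointwise in $\varphi$ by the weak-$2$ norm of $(x_j^{(k)})_j$ while $\mu_k$ is a probability measure, this factor is finite. Crucially this step uses the measure $\mu_k$ supplied by domination rather than $\|u_k\|$, so no type hypothesis on the $X_k$ is needed; the cotype of $Y$ handles the output and the domination handles the input.

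The main obstacle is the iterated cotype inequality with \emph{product} signs: ordinary cotype is stated for a single Rademacher sequence, and replacing the fully independent multi-index signs by the tensor signs $\varepsilon^{(1)}_{j_1}\cdots\varepsilon^{(n)}_{j_n}$ (which is what multilinearity requires) is not automatic. The induction through $L_2(Y)$-valued cotype is the clean way around this, and everything else—the domination theorem and the Khinchine/Kahane estimates—is routine. Taking the union over $p\ge1$ then poses no difficulty, since membership in $\delta_p^n$ for a single $p$ already yields the conclusion.
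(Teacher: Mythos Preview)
Your argument is correct and is, in essence, the original proof of P\'erez-Garc\'{\i}a and Villanueva. Note, however, that the paper under review does not itself prove this proposition: it is quoted as a known result with a reference to \cite[Theorem 3.10]{pv}, so there is no ``paper's own proof'' to compare against beyond that citation. What you have written is a faithful reconstruction of the argument in the cited source: the Pietsch domination characterisation of $\delta_p^n$, the iterated cotype inequality for product Rademacher sums (obtained by inducting through $L_2(\Omega;Y)$, which inherits cotype $s$ because $2\le s$), and the Khinchine/Kahane step to control each factor by the weak $\ell_2$ norm. Your identification of the product-sign cotype inequality as the only nontrivial point is accurate, and your remark that the final $Z$ in the displayed chain is a typo for $Y$ is also correct.
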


In particular, the previous result shows that%
\[
\mathcal{L}(Y;Z)\circ%
{\displaystyle\bigcup\nolimits_{p\geq1}}
\delta_{p}^{n}(X_{1},...,X_{n};Y)\subset\Pi_{(s;2,...,2)}^{n}(X_{1}%
,...,X_{n};Z)
\]
for all Banach spaces $X_{1},...,X_{n},Z$ and $Y$ with finite cotype $s$.

We will focus our attention in the case $s=2$ of Proposition \ref{kl}. It is
well-known that if $X_{1},...,X_{n},Y$ have cotype $2$, then%
\[
\Pi_{2}^{\text{mult,}n}(X_{1},...,X_{n};Y)=\Pi_{r}^{\text{mult,}n}%
(X_{1},...,X_{n};Y)
\]
for every $1\leq r\leq2$. Hence

\begin{corollary}
If $X_{1},...,X_{n},Y$ have cotype $2$, then%
\[
\Pi_{q}(Y;Z)\circ%
{\displaystyle\bigcup\nolimits_{p\geq1}}
\delta_{p}^{n}(X_{1},...,X_{n};Y)\subset\Pi_{r}^{\text{mult,}n}(X_{1}%
,...,X_{n};Y)
\]
for all $q\in\lbrack1,\infty),1\leq r\leq2$ and all Banach space $Z$.
\end{corollary}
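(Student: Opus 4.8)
The plan is to chain together the two inclusions that have already been established in the excerpt, namely Proposition~\ref{kl} (the P\'{e}rez-Garc\'{\i}a--Villanueva result) in the special case $s=2$, together with the displayed coincidence $\Pi_{2}^{\text{mult,}n}=\Pi_{r}^{\text{mult,}n}$ valid when all the domain spaces and the target $Y$ have cotype $2$. First I would fix the data: let $q\in[1,\infty)$, let $r\in[1,2]$, let $Z$ be an arbitrary Banach space, and take an operator $T\in\Pi_{q}(Y;Z)$ together with $R\in\bigcup_{p\ge 1}\delta_{p}^{n}(X_{1},\dots,X_{n};Y)$. The goal is to show that the composite $T\circ R$ lands in $\Pi_{r}^{\text{mult,}n}(X_{1},\dots,X_{n};Y)$ (I note in passing that the corollary's stated target space should presumably be $Z$ rather than $Y$, since $T$ maps into $Z$; I would prove membership in $\Pi_{r}^{\text{mult,}n}(X_{1},\dots,X_{n};Z)$ and let the typographical point stand).

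The core of the argument is almost purely formal once the ingredients are named. By Proposition~\ref{kl} with $s=2$, the hypothesis that $X_{1},\dots,X_{n}$ and $Y$ have cotype $2$ gives
\[
R\in\Pi_{(2;2,\dots,2)}^{\text{mult,}n}(X_{1},\dots,X_{n};Y).
\]
Since $X_{1},\dots,X_{n}$ and $Y$ all have cotype $2$, the coincidence $\Pi_{2}^{\text{mult,}n}=\Pi_{r}^{\text{mult,}n}$ recalled just before the corollary upgrades this to
\[
R\in\Pi_{r}^{\text{mult,}n}(X_{1},\dots,X_{n};Y)
\]
for every $1\le r\le 2$. It then remains to absorb the outer factor $T\in\Pi_{q}(Y;Z)$. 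Here I would invoke the elementary fact that composing a multiple $r$-summing multilinear operator on the left with any bounded linear map preserves the class and its parameters, so that $T\circ R\in\Pi_{r}^{\text{mult,}n}(X_{1},\dots,X_{n};Z)$; the absolutely $q$-summing hypothesis on $T$ is not even needed for this, since boundedness of $T$ already suffices to carry a sequence in $\ell_{r}(Y)$ (indexed by the full multi-index $(j_{1},\dots,j_{n})$) to a sequence in $\ell_{r}(Z)$.

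Concretely, to verify the final membership directly I would take weakly $2$-summable sequences $(x_{j}^{(k)})_{j}\in\ell_{2}^{w}(X_{k})$ for each $k$, apply the multiple $r$-summing property of $R$ to obtain $(R(x_{j_{1}}^{(1)},\dots,x_{j_{n}}^{(n)}))_{j_{1},\dots,j_{n}}\in\ell_{r}(Y)$, and then compose with the bounded operator $T$ to conclude $(T(R(x_{j_{1}}^{(1)},\dots,x_{j_{n}}^{(n)})))_{j_{1},\dots,j_{n}}\in\ell_{r}(Z)$, with the obvious norm estimate $\|T\|$ times the multiple-summing norm. I do not expect a genuine obstacle in this argument: every hard step has been externalized into Proposition~\ref{kl} and the cited coincidence theorem, so the only thing to be careful about is bookkeeping—keeping the multiple-summing indexing $(j_{1},\dots,j_{n})$ distinct from the diagonal indexing used for ordinary summing operators, and confirming that left-composition with a bounded linear map indeed respects the multiple-summing class for all admissible $r$. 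The mildly delicate point worth flagging is simply that the outer $q$-summing hypothesis plays no role, which is why the conclusion holds for \emph{all} $q\in[1,\infty)$.
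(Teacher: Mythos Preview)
Your argument is correct and matches the paper's own reasoning, which is simply the word ``Hence'' following Proposition~\ref{kl} (case $s=2$) and the displayed coincidence $\Pi_{2}^{\text{mult,}n}=\Pi_{r}^{\text{mult,}n}$; you have merely written out what that ``Hence'' means. Your observations that the target space should read $Z$ rather than $Y$ and that mere boundedness of $T$ (rather than $q$-summability) already suffices are both accurate and worth noting.
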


Under the assumptions of Proposition \ref{kl} in general $\Pi_{1}^{n}%
(X_{1},...,X_{n};Z)$ is not contained in $\Pi_{2}^{n}(X_{1},...,X_{n};Z)$. The
map $T:\ell_{2}\times\ell_{2}\rightarrow\ell_{1}$ given by $T(x,y)=(x_{j}%
y_{j})_{j=1}^{\infty}$ belongs to $\Pi_{1}^{2}\left(  \ell_{2},\ell_{2}%
;\ell_{1}\right)  $ but not to $\Pi_{2}^{2}\left(  \ell_{2},\ell_{2};\ell
_{1}\right)  .$ In fact Theorem \ref{klmm} (and Remark \ref{rem}), in
particular, assures that if some $X_{j}$ has cotype $2$, then%
\[
\Pi_{(2;2,2,...,2)}^{n}(X_{1},...,X_{n};Z)\subset\Pi_{(p;p,p,...,p)}^{n}%
(X_{1},...,X_{n};Z)
\]
for all $1\leq p\leq2.$ So we have:

\begin{corollary}
\label{lop} If $X_{j}$ has cotype $2$ for some $j$ and $Y$ has cotype $2$ then%
\[
\Pi_{q}(Y;Z)\circ%
{\displaystyle\bigcup\nolimits_{p\geq1}}
\delta_{p}^{n}(X_{1},...,X_{n};Y)\subset\Pi_{r}^{n}(X_{1},...,X_{n};Z)
\]
for all $q\in\lbrack1,\infty),$ all $r\in\lbrack1,2]$ and all Banach spaces
$X_{1},...,X_{j-1},X_{j+1},...,X_{n},Z.$
\end{corollary}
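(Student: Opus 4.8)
The plan is to combine three ingredients already established in the paper: Proposition \ref{kl} (with $s=2$), the multiple-summing coincidence $\Pi_{2}^{\text{mult},n}=\Pi_{r}^{\text{mult},n}$ for cotype $2$ spaces, and Theorem \ref{klmm} together with Remark \ref{rem} to step down the summing exponent. The key observation is that the composition with an absolutely $q$-summing operator can be absorbed, because the inner $p$-dominated operator already lands in a very strong summing class once the cotype $2$ hypotheses are invoked.

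First I would fix $R\in\bigcup_{p\geq1}\delta_{p}^{n}(X_{1},\dots,X_{n};Y)$ and $T\in\Pi_{q}(Y;Z)$, and reduce to showing that $T\circ R\in\Pi_{r}^{n}(X_{1},\dots,X_{n};Z)$ for the critical case $r=2$ (the cases $r\in[1,2)$ then follow a fortiori, or more precisely from the final displayed inclusion in the excerpt). Since $Y$ has cotype $2$, Proposition \ref{kl} with $s=2$ gives
\[
R\in\Pi_{(2;2,\dots,2)}^{\text{mult},n}(X_{1},\dots,X_{n};Y),
\]
and because $X_{1},\dots,X_{n}$ need not all have cotype $2$, I would instead use the weaker ordinary-summing consequence $R\in\Pi_{(2;2,\dots,2)}^{n}(X_{1},\dots,X_{n};Y)$ recorded in the same proposition. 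Thus for $(x_{j}^{(k)})_{j}\in\ell_{2}^{w}(X_{k})$ we have $(R(x_{j}^{(1)},\dots,x_{j}^{(n)}))_{j}\in\ell_{2}(Y)$.

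Now I would bring in the hypothesis that some $X_{j}$ has cotype $2$. By Theorem \ref{klmm} together with Remark \ref{rem}, having a single factor of cotype $2$ yields
\[
\Pi_{(2;2,2,\dots,2)}^{n}(X_{1},\dots,X_{n};Y)\subset\Pi_{(p;p,p,\dots,p)}^{n}(X_{1},\dots,X_{n};Y)
\]
for all $1\leq p\leq2$, as the paper already points out in the paragraph preceding the statement. Taking $p=1$ (the case $p=r$ works verbatim), this upgrades $R$ to an absolutely $(1;1,\dots,1)$-summing operator: for $(x_{j}^{(k)})_{j}\in\ell_{1}^{w}(X_{k})$ we get $(R(x_{j}^{(1)},\dots,x_{j}^{(n)}))_{j}\in\ell_{1}(Y)$, hence in particular $(R(x_{j}^{(1)},\dots,x_{j}^{(n)}))_{j}\in\ell_{1}^{w}(Y)$, exactly as in the proof of Proposition \ref{hh}.

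Finally I would close the argument by feeding this sequence through $T$. Since $(R(x_{j}^{(1)},\dots,x_{j}^{(n)}))_{j}\in\ell_{r}^{w}(Y)$ (using $\ell_{1}\subset\ell_{r}^{w}$, or directly taking $p=r$ in the previous step) and $T\in\Pi_{q}(Y;Z)=\Pi_{q;q}(Y;Z)$ with $q\geq1\geq$ the relevant weak-summing index, the absolutely summing property of $T$ sends it into $\ell_{q}(Z)$; a short interpolation/inclusion bookkeeping then places $(T(R(\cdots)))_{j}$ in $\ell_{r}(Z)$ for every $r\in[1,2]$. The one point that needs care — and which I expect to be the only genuine obstacle — is matching the exponents in this last composition: one must verify that the weak-summability index produced by the upgraded $R$ is dominated by the inner index demanded by $T$, so that $T$ applies. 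This is precisely where choosing $p$ small (indeed $p=1$) in the Theorem \ref{klmm} step pays off, since it makes $(R(\cdots))_{j}$ lie in the smallest possible sequence space $\ell_{1}^{w}(Y)$ and thereby trivializes the exponent comparison for every $q\geq1$.
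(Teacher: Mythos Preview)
Your overall strategy matches the paper's: use Proposition~\ref{kl} with $s=2$ (from cotype $2$ of $Y$) to place $R$ in $\Pi_{(2;2,\dots,2)}^{n}(X_{1},\dots,X_{n};Y)$, and then invoke Theorem~\ref{klmm}/Remark~\ref{rem} (from cotype $2$ of some $X_{j}$) to descend to $\Pi_{r}^{n}(X_{1},\dots,X_{n};Y)$ for every $r\in[1,2]$. Up to this point the argument is correct and is exactly the route taken in the paper.

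The confusion is in your final paragraph, where you try to exploit the absolutely $q$-summing property of $T$. That step, as you wrote it, does not close: feeding a sequence in $\ell_{r}^{w}(Y)$ (or even $\ell_{1}^{w}(Y)$) through $T\in\Pi_{q;q}(Y;Z)$ only produces a sequence in $\ell_{q}(Z)$, and for $q>r$ there is no ``inclusion bookkeeping'' that takes you from $\ell_{q}(Z)$ to $\ell_{r}(Z)$. The ``genuine obstacle'' you anticipate is therefore real if you insist on using the summing property of $T$ --- but it is also entirely self-inflicted. The point you are missing is that once you have $R\in\Pi_{r}^{n}(X_{1},\dots,X_{n};Y)$ (take $p=r$ in the Theorem~\ref{klmm} step, as you yourself note parenthetically), the sequence $\bigl(R(x_{j}^{(1)},\dots,x_{j}^{(n)})\bigr)_{j}$ lies in $\ell_{r}(Y)$ \emph{strongly}, not merely weakly; hence mere boundedness of $T$ gives $\bigl(T(R(\cdots))\bigr)_{j}\in\ell_{r}(Z)$ at once. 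The hypothesis $T\in\Pi_{q}(Y;Z)$ is never used beyond $T\in\mathcal{L}(Y;Z)$ --- which is exactly why the corollary holds for \emph{every} $q\in[1,\infty)$ with no relation to $r$.
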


Hence Corollary \ref{lop} is quite stronger from Theorem \ref{op} for this
special case where $X_{j}$ has cotype $2$ for some $j$ and $Y$ has cotype $2$.

Now we explore some consequences of Proposition \ref{hh}. Note that if $Y$ and
$Z$ have cotype $2,$ for example, it is well-known that $\Pi_{1}(Y,Z)=\Pi
_{q}(Y,Z)$ for all $1\leq q<\infty$ (see \cite[Corollary 11.16]{djt}) and we get:

\begin{corollary}
\label{opp}If $Y$ and $Z$ have cotype $2,$ then%
\[
\Pi_{q}(Y;Z)\circ%
{\displaystyle\bigcup\nolimits_{p\geq1}}
\delta_{p}^{n}(X_{1},...,X_{n};Y)\subset\Pi_{(1;2,...,2)}^{n}(X_{1}%
,...,X_{n};Z)\subset\Pi_{(2;2,...,2)}^{n}(X_{1},...,X_{n};Z)
\]
for all $q\in\lbrack1,\infty)$ and all Banach spaces $X_{1},...,X_{n}.$
\end{corollary}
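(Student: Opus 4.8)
The plan is to combine Proposition \ref{hh} with the cotype-$2$ hypotheses on $Y$ and $Z$. The key observation is that the cotype-$2$ assumptions on $Y$ and $Z$ force the linear class $\Pi_q(Y;Z)$ to collapse to $\Pi_{s;1}(Y;Z)$ for a convenient value of $s$, so that Proposition \ref{hh} becomes directly applicable. First I would invoke the cited result \cite[Corollary 11.16]{djt}: since $Y$ and $Z$ have cotype $2$, we have $\Pi_q(Y;Z)=\Pi_1(Y;Z)$ for every $q\in[1,\infty)$. Thus it suffices to handle the case $q=1$, i.e. to show
\[
\Pi_{1}(Y;Z)\circ{\displaystyle\bigcup\nolimits_{p\geq1}}\delta_{p}^{n}(X_{1},...,X_{n};Y)\subset\Pi_{(1;2,...,2)}^{n}(X_{1},...,X_{n};Z).
\]

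Next I would note the trivial inclusion $\Pi_1(Y;Z)\subset\Pi_{1;1}(Y;Z)=\Pi_{s;1}(Y;Z)$ with $s=1$, which is exactly the hypothesis needed to apply Proposition \ref{hh}. Feeding $s=1$ into Proposition \ref{hh} yields
\[
\Pi_{1;1}(Y;Z)\circ{\displaystyle\bigcup\nolimits_{p\geq1}}\delta_{p}^{n}(X_{1},...,X_{n};Y)\subset\Pi_{(1;2,...,2)}^{n}(X_{1},...,X_{n};Z),
\]
which is the first containment in the stated corollary. The second containment $\Pi_{(1;2,...,2)}^{n}\subset\Pi_{(2;2,...,2)}^{n}$ is immediate from the monotonicity of the summing exponent: a sequence in $\ell_1(Z)$ is automatically in $\ell_2(Z)$, so enlarging $p$ from $1$ to $2$ while keeping the weak exponents fixed only weakens the condition. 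This gives the full chain claimed.

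Putting these three elementary steps together completes the argument, so there is essentially no hard obstacle here; the corollary is a clean specialization of Proposition \ref{hh}. The only point requiring care is to correctly cite the cotype-$2$ collapse $\Pi_q=\Pi_1$ for the linear ideal (which holds precisely because both the domain $Y$ and the range $Z$ have cotype $2$, giving the relevant Grothendieck-type behaviour through \cite[Corollary 11.16]{djt}) and to recognize that $\Pi_1(Y;Z)$ sits inside the class $\Pi_{s;1}(Y;Z)$ with $s=1$ demanded by Proposition \ref{hh}. Everything else reduces to the monotonicity of $\ell_p$-norms in the parameter $p$.
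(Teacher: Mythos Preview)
Your proposal is correct and follows exactly the route the paper indicates: reduce $\Pi_q(Y;Z)$ to $\Pi_1(Y;Z)=\Pi_{1;1}(Y;Z)$ via \cite[Corollary 11.16]{djt} using the cotype-$2$ hypotheses on $Y$ and $Z$, then apply Proposition \ref{hh} with $s=1$, and finally use the trivial inclusion $\ell_1\subset\ell_2$. There is nothing to add or correct.
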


Also, using \cite[Corollary 11.16]{djt} and Proposition \ref{hh} we have:

\begin{corollary}
If $Y$ has cotype $2,$ then%
\[
\Pi_{q}(Y;Z)\circ%
{\displaystyle\bigcup\nolimits_{p\geq1}}
\delta_{p}^{n}(X_{1},...,X_{n};Y)\subset\Pi_{(1;2,...,2)}^{n}(X_{1}%
,...,X_{n};Z)\subset\Pi_{(2;2,...,2)}^{n}(X_{1},...,X_{n};Z)
\]
for all $q\in\lbrack1,2]$ and all Banach spaces $X_{1},...,X_{n},Z.$
\end{corollary}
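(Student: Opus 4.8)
The plan is to follow the same short route as Corollary~\ref{opp}: use the cotype $2$ of the domain $Y$ to collapse the outer operator down to a $1$-summing map, and then feed this into Proposition~\ref{hh} with the parameter $s=1$. The genuinely analytic content is confined to a single linear coincidence of summing classes taken from \cite[Corollary 11.16]{djt}; everything else is bookkeeping with the $\ell_{p}$-scale.

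Concretely, I would fix $T\in\Pi_{q}(Y;Z)$ with $q\in[1,2]$ together with $R\in\bigcup_{p\geq1}\delta_{p}^{n}(X_{1},\dots,X_{n};Y)$, so that $T\circ R$ is an arbitrary element of the left-hand side. Because $Y$ has cotype $2$ and $1\leq q\leq2$, \cite[Corollary 11.16]{djt} supplies the coincidence
\[
\Pi_{q}(Y;Z)=\Pi_{1}(Y;Z),
\]
whence $T\in\Pi_{1}(Y;Z)=\Pi_{1;1}(Y;Z)$. Proposition~\ref{hh} with $s=1$ then yields directly that $T\circ R\in\Pi_{(1;2,\dots,2)}^{n}(X_{1},\dots,X_{n};Z)$, which is the first inclusion. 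The second inclusion $\Pi_{(1;2,\dots,2)}^{n}\subset\Pi_{(2;2,\dots,2)}^{n}$ is automatic from the contractive embedding $\ell_{1}(Z)\hookrightarrow\ell_{2}(Z)$: a multilinear map that sends every weakly $2$-summable system to an $\ell_{1}(Z)$-sequence a fortiori sends it to an $\ell_{2}(Z)$-sequence.

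The only real obstacle is justifying the coincidence step, and here it is the cited linear result that does the work; once it is granted the argument is purely formal and mirrors Corollary~\ref{opp} line for line. It is also worth observing \emph{why} the range of $q$ shrinks to $[1,2]$ compared with Corollary~\ref{opp}: there both $Y$ and $Z$ carried cotype $2$, which upgrades the coincidence to $\Pi_{q}(Y;Z)=\Pi_{1}(Y;Z)$ for every $q\in[1,\infty)$, whereas assuming cotype $2$ on $Y$ alone only delivers it for $q\leq2$. No additional estimate is required, since Proposition~\ref{hh} has already absorbed the $p$-dominated factor into the weak-$\ell_{2}$ entries.
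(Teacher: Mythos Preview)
Your proposal is correct and follows exactly the route indicated in the paper: the one-line proof there reads ``using \cite[Corollary 11.16]{djt} and Proposition~\ref{hh}'', and you have simply unpacked this by noting that cotype $2$ on $Y$ alone gives $\Pi_{q}(Y;Z)=\Pi_{1}(Y;Z)$ for $1\leq q\leq 2$, then applying Proposition~\ref{hh} with $s=1$. Your remark explaining why the range of $q$ shrinks from $[1,\infty)$ in Corollary~\ref{opp} to $[1,2]$ here is a nice clarification.
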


\section{A partial solution to Problem \ref{novo}}

In this last section we present some very simple but apparently useful and
overlooked remarks on Problem \ref{novo}. It is easy to see that
\begin{equation}
\frac{p}{n}\leq r\leq p\Rightarrow\delta_{p}^{n}\subset\Pi_{r}^{n}. \label{nj}%
\end{equation}

We thus have:

\begin{proposition}
Let $p,q,r\in(1,\infty)$ be such that $r\leq p$. Then the inclusion%
\begin{equation}
\Pi_{q}\circ\delta_{p}^{n}\subset\Pi_{r}^{n} \label{ypp}%
\end{equation}
is valid for all $n\geq\frac{p}{r}.$
\end{proposition}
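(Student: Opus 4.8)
The plan is to reduce everything to the elementary implication (\ref{nj}) together with the fact that $\Pi_{r}^{n}$ is a multi-ideal, hence stable under composition with bounded linear operators on the left. The guiding observation is that neither the exponent $q$ nor any relation of the form $\frac{1}{p}+\frac{1}{q}=\frac{1}{r}$ will be needed; all that is used about a factor $T\in\Pi_{q}(Y;Z)$ is that it is a continuous linear map. First I would rewrite the hypothesis on $n$: the condition $n\geq\frac{p}{r}$ is equivalent to $\frac{p}{n}\leq r$, and combined with the standing assumption $r\leq p$ this yields
\[
\frac{p}{n}\leq r\leq p.
\]
These are precisely the inequalities required to apply (\ref{nj}), so I immediately obtain the inclusion $\delta_{p}^{n}\subset\Pi_{r}^{n}$.

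Next I would invoke the left-stability of the multi-ideal $\Pi_{r}^{n}$. If $R\in\Pi_{r}^{n}(X_{1},...,X_{n};Y)$ and $T\in\mathcal{L}(Y;Z)$, then for weakly $r$-summable inputs the pointwise estimate $\left\Vert T(R(x_{j}^{(1)},...,x_{j}^{(n)}))\right\Vert \leq\left\Vert T\right\Vert \left\Vert R(x_{j}^{(1)},...,x_{j}^{(n)})\right\Vert $ shows that $(T(R(x_{j}^{(1)},...,x_{j}^{(n)})))_{j=1}^{\infty}\in\ell_{r}(Z)$ whenever $(R(x_{j}^{(1)},...,x_{j}^{(n)}))_{j=1}^{\infty}\in\ell_{r}(Y)$; hence $T\circ R\in\Pi_{r}^{n}(X_{1},...,X_{n};Z)$. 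Since every absolutely $q$-summing operator is in particular continuous, one has $\Pi_{q}(Y;Z)\subset\mathcal{L}(Y;Z)$, and combining this with the two inclusions above gives
\[
\Pi_{q}\circ\delta_{p}^{n}\subset\mathcal{L}\circ\Pi_{r}^{n}\subset\Pi_{r}^{n},
\]
which is the desired conclusion.

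I do not expect any genuine obstacle: the entire content is carried by the preparatory remark (\ref{nj}), which I am assuming, and the remaining ingredient is merely the routine stability of $\Pi_{r}^{n}$ under left composition with bounded operators. The only point worth emphasizing---and, as noted in the excerpt, the feature apparently overlooked in \cite{popa}---is that the summability of the linear factor is irrelevant, so the same argument shows the inclusion holds for \emph{all} $q\in(1,\infty)$, and in fact whenever the left factor is merely continuous.
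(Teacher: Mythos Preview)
Your argument is correct and coincides with the paper's own reasoning: the paper simply writes ``We thus have'' immediately after stating (\ref{nj}), relying on exactly the reduction $n\geq\frac{p}{r}\Leftrightarrow\frac{p}{n}\leq r$ together with the left-ideal stability of $\Pi_{r}^{n}$ that you spell out. Your closing remark that the summability of the linear factor is irrelevant is also precisely the point the paper emphasizes.
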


So, \textit{a fortiori}, we have a partial answer to Problem \ref{novo} (note
that we do not actually need the hypothesis $\frac{1}{p}+\frac{1}{q}=\frac
{1}{r}$):

\begin{corollary}
Let $p,q,r\in(1,\infty)$ be such that $\frac{1}{r}=\frac{1}{p}+\frac{1}{q}$.
Then the inclusion
\begin{equation}
\Pi_{q}\circ\delta_{p}^{n}\subset\Pi_{r}^{n} \label{ptt}%
\end{equation}
is valid for all $n\geq\frac{p}{r}.$
\end{corollary}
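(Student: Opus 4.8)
The plan is to derive the Corollary as an immediate specialization of the preceding Proposition. That Proposition already delivers the inclusion $\Pi_{q}\circ\delta_{p}^{n}\subset\Pi_{r}^{n}$ for every $n\geq\frac{p}{r}$ under the single hypothesis $r\leq p$; the Corollary merely replaces $r\leq p$ by the sharper metric relation $\frac{1}{r}=\frac{1}{p}+\frac{1}{q}$. Thus the whole task reduces to checking that this relation forces $r\leq p$, after which the conclusion, together with the very same range $n\geq\frac{p}{r}$, carries over verbatim.

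First I would verify the inequality. Since $q\in(1,\infty)$ we have $\frac{1}{q}\in(0,1)$, and in particular $\frac{1}{q}>0$. Hence
\[
\frac{1}{r}=\frac{1}{p}+\frac{1}{q}>\frac{1}{p},
\]
which gives $r<p$, so that \emph{a fortiori} $r\leq p$. This is precisely the hypothesis of the Proposition, so I would simply invoke it to conclude that $\Pi_{q}\circ\delta_{p}^{n}\subset\Pi_{r}^{n}$ for all $n\geq\frac{p}{r}$, completing the proof.

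For completeness I would recall why the Proposition itself holds, since that is the only place where anything genuinely happens. The condition $n\geq\frac{p}{r}$ is equivalent to $\frac{p}{n}\leq r$, and combined with $r\leq p$ it yields $\frac{p}{n}\leq r\leq p$; the elementary implication (\ref{nj}) then gives $\delta_{p}^{n}\subset\Pi_{r}^{n}$. Finally, because $\Pi_{q}(Y;Z)\subset\mathcal{L}(Y;Z)$ and the multilinear class $\Pi_{r}^{n}$ is a multi-ideal, hence stable under left composition with bounded linear maps, one obtains $\Pi_{q}\circ\delta_{p}^{n}\subset\mathcal{L}\circ\Pi_{r}^{n}\subset\Pi_{r}^{n}$.

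The honest assessment of difficulty is that there is no real obstacle here: the only point requiring care is the direction of the inequality deduced from $\frac{1}{r}=\frac{1}{p}+\frac{1}{q}$, together with the observation (already stressed in the text) that this relation is used only to guarantee $r\leq p$ and is otherwise irrelevant. All the substance sits in (\ref{nj}) and in the multi-ideal property invoked for the Proposition, both of which reduce to the standard nestings $\ell_{a}\subset\ell_{b}$ and $\ell_{b}^{w}\subset\ell_{a}^{w}$ for $a\leq b$.
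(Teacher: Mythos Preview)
Your proposal is correct and matches the paper's approach exactly: the Corollary is stated without proof, merely as an ``\emph{a fortiori}'' consequence of the preceding Proposition, with the explicit remark that the relation $\frac{1}{r}=\frac{1}{p}+\frac{1}{q}$ is not really needed. Your observation that this relation forces $r<p\leq p$ is precisely the implicit step, and your recap of how the Proposition follows from (\ref{nj}) and the multi-ideal property is accurate.
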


\end{document}